\DeclareMathAlphabet{\mathpzc}{OT1}{pzc}{m}{it}
\newtheoremstyle{nametheorem}
  {3pt}
  {3pt}
  {\itshape}
  {}
  {\bfseries}
  {.}
  {.5em}
  {\thmname{#1} \thmnote{#3}}
\newtheorem{thm}{Theorem}[section]
\newtheorem{cor}[thm]{Corollary}
\newtheorem{lem}[thm]{Lemma}
\newtheorem{prop}[thm]{Proposition}
\theoremstyle{nametheorem}
\newtheorem*{conj}{Conjecture}
\theoremstyle{definition}
\newtheorem{defn}[thm]{Definition}
\theoremstyle{remark}
\newtheorem{rem}[thm]{Remark}
\numberwithin{equation}{section}
\DeclarePairedDelimiterX\set[1]\{\}{%

#1
}
\newcommand*{\mto}{\rightarrow} 
\newcommand*{\qto}{\twoheadrightarrow} 
\newcommand*{\Int}{\mathbb{Z}} 
\newcommand*{\Rat}{\mathbb{Q}} 
\newcommand*{\N}{\mathbb{N}} 
\newcommand*{\FF}{\mathbb{F}} 
\newcommand*{\cmplx}[1]{{#1}^\bullet} 
\newcommand*{\tensor}{\otimes} 
\newcommand*{\Ltensor}{\tensor^{\mathbb{L}}} 
\newcommand*{\isomorph}{\cong} 
\newcommand*{\et}{\mathrm{\acute{e}t}} 
\newcommand*{\algc}[1]{\overline{#1}} 
\newcommand{\nr}{\mathrm{nr}} 
\newcommand{\ab}{\mathrm{ab}} 
\newcommand{\cyc}{\mathrm{cyc}} 
\newcommand*{\pdual}[1]{{#1}^{\vee}} 
\newcommand{\IntR}{\mathcal{O}} 
\newcommand{\cs}{\mathrm{cs}} 
\newcommand{\sesi}{\mathrm{ss}} 
\newcommand{\Jac}{\mathfrak{J}} 
\newcommand{\Sub}{\mathit{Sub}} 
\DeclareMathOperator{\HF}{H} 
\DeclareMathOperator{\Hom}{Hom} 
\DeclareMathOperator{\Ext}{Ext} 
\DeclareMathOperator{\End}{End} 
\DeclareMathOperator{\im}{im} 
\DeclareMathOperator{\coker}{coker} 
\DeclareMathOperator{\Spec}{Spec} 
\DeclareMathOperator{\Gal}{G} 
\DeclareMathOperator{\cd}{cd} 
\DeclareMathOperator{\Cl}{Cl} 
\DeclareMathOperator{\Tate}{T} 
\DeclareMathOperator{\rk}{rk} 
\title{Fine Selmer Groups and Isogeny Invariance}
\author{R.~Sujatha}
\author{M.~Witte}
\address{R.~Sujatha\newline Department of Mathematics, \newline University of British Columbia, \newline Vancouver, \newline Canada V6T1Z2}%
\email{sujatha@math.ubc.ca}%
\address{M.~Witte\newline Universit\"at Heidelberg,\newline
Mathematisches Institut,\newline
Im Neuenheimer Feld 205,\newline
D-69120 Heidelberg }%
\email{witte@mathi.uni-heidelberg.de}%
\subjclass{11R23 (11G05)}
\date{\today}
\begin{document}

\maketitle
\begin{abstract}
We investigate fine Selmer groups for elliptic curves and for Galois representations over a number field. More specifically, we discuss Conjecture A, which states that the fine Selmer group of an elliptic curve over the cyclotomic extension is a finitely generated $\Int_p$-module. The relationship between this conjecture and Iwasawa's classical $\mu=0$ conjecture is clarified. We also present some partial results towards the question whether Conjecture A is invariant under isogenies.
\end{abstract}

\section{Introduction}

The fine Selmer group is a subgroup of the Selmer group of elliptic curves which plays an important role in Iwasawa theory. More generally, it can be defined for any Galois representation over a number field. It has been widely studied by many authors \cite{Billot:quelques}, \cite[App.~B]{PerrinRiou:padicLFunctions}, \cite{Greenberg:ProjectiveModules}, \cite{Wuthrich:IwTHfineSelmer}, \cite{CoatesSujatha:FineSelmer}. In the last reference, the authors conjecture  that the fine Selmer group of an elliptic curve over the cyclotomic extension is a finitely generated $\Int_p$-module (Conjecture A in \emph{loc.~cit.}). There is ample numerical evidence in support of this conjecture \cite{Wuthrich:IwTHfineSelmer}, \cite{Aribam:OntheMuInvariant}. It is further stated (rather optimistically) in \cite{CoatesSujatha:FineSelmer} that Conjecture A should be invariant under isogeny. The results in this article indicate that the extent of optimism expressed towards isogeny invariance is not commensurate with what one can concretely prove. The relationship between Conjecture A and Iwasawa's classical $\mu=0$ conjecture is already touched upon in \cite{CoatesSujatha:FineSelmer} and \cite{Greenberg:ProjectiveModules}. Our results in this paper demonstrate the depth of this relationship. In particular, the results seem to indicate that even the seemingly weaker isogeny invariance conjecture is potentially as difficult as the $\mu=0$ conjecture.

This article has six sections. In Section~\ref{sec:Notation}, we introduce notation that is used subsequently throughout the paper. In Section~\ref{sec:Fine Selmer}, we discuss the fine Selmer group and its relation to other cohomological modules. Several assertions that are equivalent to Conjecture A are discussed in Section~\ref{sec:Conjecture A}. In Section~\ref{sec:IsogenyInvariance}, we investigate the isogeny invariance of Conjecture A for elliptic curves. In particular, we prove isogeny invariance for a large class of CM elliptic curves. In the final Section~\ref{sec:ClosingRemarks}, we point towards a connection of isogeny invariance and cup products in cohomology.

We would like to thank Karl Rubin, Christian Wuthrich, and Benedict Gross for helpful conversations on CM elliptic curves. We also thank Zheng Li for helpful insights into this problem.

\section{Notation}\label{sec:Notation}

Throughout the text, we will use the following notation. For any field $F$, we let $\Gal_F$ denote its absolute Galois group. The letter $p$ will always denote an odd prime. If $K$ is a number field, a \emph{prime} of $K$ will always refer to a non-archimedean place of $K$. If $S$ is a finite set of primes of $K$, we will write $K_S$ for the maximal extension of $K$ which is unramified outside $S$ and the archimedean places of $K$. We will always assume that $S$ contains all primes of $K$ above $p$. For any subextension $L/K$ of $K_S/K$ we let $\Gal_S(L)$ denote the Galois group of $K_S/L$ and $S_L$ the primes of $L$ above $S$. If $v$ is a prime of $L$, we write $L_v$ for the completion of $L$ at $v$ and $k(v)$ for its residue field. For any profinite group $G$, we write $\cd_p G$ for its $p$-cohomological dimension. For any compact or discrete $G$-module $M$, we write $\HF^i(G,M)$ for the $i$-th continuous cohomology group of $M$ and $\pdual{M}$ for its Pontryagin dual. For any compact or discrete $\Gal_S(K)$-module $M$ and any integer $n$, we write $M(n)$ for its $n$-th Tate twist.

We write $K_\cyc$ for the cyclotomic $\Int_p$-extension of $K$ and let $\Gamma$ denote the Galois group of $K_\cyc/K$. The profinite group rings of $\Gamma$ over $\Int_p$ and $\FF_p$ will be denoted by
\[
 \Lambda=\Int_p[[\Gamma]],\qquad\Omega=\FF_p[[\Gamma]],
\]
respectively. We will write $Q$ for the quotient field of $\Omega$.

More generally, let $\mathcal{L}/K$ be any Galois subextension of $K_S/K$ such that for some finite extension $K'/K$ inside $\mathcal{L}$, the extension $\mathcal{L}/K'$ is pro-$p$. In particular, since the maximal pro-$p$-quotient of $G_S(K')$ is topologically finitely generated \cite[Thm. 10.7.12, Prop. 3.9.1]{NSW:CohomNumFields}, the Galois group $\mathcal{G}=\Gal(\mathcal{L}/K)$ is topologically finitely generated, as well. The profinite group ring $\Int_p[[\mathcal{G}]]$ is then an \emph{adic ring} in the sense that it is compact for the topology defined by the powers of its Jacobson radical $\Jac_{\Int_p[[\mathcal{G}]]}$ \cite[Prop. 3.2]{Witte:MCVarFF}. We may  equip $\Int_p[[\mathcal{G}]]$ with a continuous $\Gal_S(K)$-action by letting $g\in\Gal_S(K)$ act as multiplication by the inverse of its image in $\mathcal{G}$. Let $\Int_p[[\mathcal{G}]]^\sharp$ denote the corresponding $\Gal_S(K)$-module.

Suppose that $M$ is a compact $\Int_p$-module with a continuous $\Gal_S(K)$-action such that $\pdual{M}$ is a countable set. In other words, $M$ has a presentation
\[
M=\varprojlim_{n\in\N}M_n
\]
with $M_n$ finite. We set
\begin{equation}\label{eqn:def of coinduction}
M_{\mathcal{L}}=\varprojlim_{n\in\N}\Int_p[[\mathcal{G}]]^\sharp/\Jac_{\Int_p[[\mathcal{G}]]}^n\tensor_{\Int_p}M_n
\end{equation}
so that $M_{\mathcal{L}}$ is the completed tensor product over $\Int_p$ of $\Int_p[[\mathcal{G}]]^\sharp$ and $M$.

Let $v$ be a prime of $K$. We then have
\begin{equation}\label{eqn:Iwasawa cohomology}
\begin{aligned}
 \HF^i(\Gal_S(K),M_{\mathcal{L}})&=\varprojlim_{K\subset L\subset \mathcal{L}}\HF^i(\Gal_S(L),M),\\
 \HF^i(\Gal_{K_v},M_{\mathcal{L}})&=\varprojlim_{K\subset L\subset \mathcal{L}}\bigoplus_{w\mid v}\HF^i(G_{L_w},M)
\end{aligned}
\end{equation}
where the limit is taken over all finite subextensions $L/K$ of $\mathcal{L}/K$ with respect to the corestriction map \cite[Cor. 2.7.6]{NSW:CohomNumFields}. In particular, $\HF^i(\Gal_S(K),M_{\mathcal{L}})$ agrees with the group denoted by $\mathcal{Z}^i(M/\mathcal{L})$ in \cite{CoatesSujatha:FineSelmer}.

For any integral domain $R$ with quotient field $Q(R)$ and any finitely generated $R$-module $T$, we will write $\rk_R T$ for the dimension of the vector space $Q(R)\tensor_R T$ over $Q(R)$. If $M$ is a  $\Gal_K$-representation on a finite-dimensional vector space over $\FF_p$, we set
\[
 \begin{aligned}
 r_M&=\rk_{\FF_p}\bigoplus_{v\mid\infty}\HF^0(K_v,\pdual{M}(1))\\
    &=-\rk_{\FF_p}\HF^0(G_S(K),M)+\rk_{\FF_p}\HF^1(G_S(K),M)-\rk_{\FF_p}\HF^2(G_S(K),M),
 \end{aligned}
\]
where $S$ is any finite set of  primes of $K$ containing the primes above $p$ such that $M$ is unramified outside $S$. In other words, $r_M$ is the negative of the Euler characteristic of $M$ as considered in \cite[8.7.4]{NSW:CohomNumFields}. We will write $M^{\sesi}$ for the semisimplification of $M$.

For any elliptic curve $E$ over $K$, we let $E[n]$ denote its $n$-torsion points considered as a $\Gal_K$-module. We write
\[
\Tate_p E=\varprojlim_n E[p^n]
\]
for the $p$-adic Tate module and
\[
 E[p^\infty]=\varinjlim_n E[p^n]
\]
for the $p$-power division points of $E$.

\section{The Fine Selmer Group and some Useful Exact Sequences}\label{sec:Fine Selmer}

Let us recall the definition of the fine Selmer group from \cite{CoatesSujatha:FineSelmer}. Let $K$ be a number field, $p$ an odd prime number, and $S$ a finite set of primes of $K$ containing the primes above $p$. Assume that $\mathcal{L}/K$ is any subextension of $K_S/K$.

\begin{defn}
For any discrete $\Gal_S(\mathcal{L})$-module $M$, the \emph{fine Selmer group} of $M$ is given by
\[
R_S(M/\mathcal{L})=\varinjlim_{L/K}\ker\left(\HF^1(\Gal_S(L),M)\mto \bigoplus_{v\in S_{L}}\HF^1(\Gal_{L_{v}},M)\right),
\]
with $L/K$ running through the finite subextensions of $\mathcal{L}/K$. We write
\[
Y_S(M/\mathcal{L})=\pdual{R_S(M/\mathcal{L})}
\]
for the Pontryagin dual of the fine Selmer group of $M$.
\end{defn}

In general, $R_S(M/\mathcal{L})$ and $Y_S(M/\mathcal{L})$ do depend on the choice of $S$. However, if we assume that $\mathcal{L}$ contains the cyclotomic $\Int_p$-extension $K_\cyc$, then the groups do not change if we enlarge $S$. In this case, we drop the $S$ from the notation.

The justification is given as follows. As
\[
R_S(M/\mathcal{L})=\varinjlim_{L/K}R_S(M/L_\cyc),
\]
with $L/K$ running through the finite subextensions of $\mathcal{L}/K$, we may as well assume that $\mathcal{L}=K_\cyc$.

Let $v$ be a  prime of $K_\cyc$. Write $I_v\subset \Gal_{(K_\cyc)_v}$ for the inertia group of $v$ and let $k(v)$ be the residue field. The Hochschild-Serre spectral sequence gives an exact sequence
\begin{equation}\label{eqn:HS for inertia}
0\mto\HF^1(\Gal_{k(v)},M^{I_v})\mto\HF^1(\Gal_{(K_\cyc)_v},M)\mto\HF^0(\Gal_{k(v)},\HF^1(I_v,M))\mto 0.
\end{equation}
and an isomorphism
\[
\HF^2(\Gal_{(K_\cyc)_v},M)\isomorph\HF^1(\Gal_{k(v)},\HF^1(I_v,M)).
\]
For any prime $v$ of $K_\cyc$ not dividing $p$, $\Gal_{k(v)}$ is of order prime to $p$ and the exact sequence \eqref{eqn:HS for inertia} reduces to an isomorphism
\begin{equation}\label{eqn:local cohomology for the cyclotomic extension}
\HF^1(\Gal_{(K_\cyc)_v},M)\isomorph\HF^0(\Gal_{k(v)},\HF^1(I_v,M)),
\end{equation}
while
\[
\HF^1(\Gal_{k(v)},\HF^1(I_v,M))=0.
\]
Furthermore, if $v$ is not contained in $S_{K_\cyc}$, then
\[
\HF^1(I_v,M)\isomorph M(-1)
\]
as $\Gal_{k(v)}$-modules.

If $T$ is any finite set of primes of $K$ containing $S$, we thus obtain a long exact Gysin sequence
\begin{equation}\label{eqn:Gysin sequence}
\begin{split}
0&\mto\HF^1(\Gal_S(K_\cyc),M)\mto\HF^1(\Gal_T(K_\cyc),M)\mto\bigoplus_{v\in (T-S)_{K_\cyc}}\HF^0(\Gal_{k(v)},M(-1))\\
 &\mto\HF^2(\Gal_S(K_\cyc),M)\mto\HF^2(\Gal_T(K_\cyc),M)\mto 0.
\end{split}
\end{equation}
Note that every prime of $K$ splits into only finitely many primes of $K_\cyc$, so that $(T-S)_{K_\cyc}$ is still a finite set.

Hence,
\begin{equation}\label{eqn:idependence of S of the fine Selmer group}
\begin{split}
&\qquad R_T(M/K_\cyc)\\
&=\ker\left(\HF^1(\Gal_T(K_\cyc),M)\mto\smashoperator[r]{\bigoplus_{v\in(T-S)_{K_\cyc}}}\HF^0(\Gal_{k(v)},M(-1))\oplus\smashoperator{\bigoplus_{v\in S_{K_\cyc}}}\HF^1(\Gal_{(K_\cyc)_v},M)\right)\\
&=\ker\left(\HF^1(\Gal_S(K_\cyc),M)\mto\bigoplus_{v\in S_{K_\cyc}}\HF^1(\Gal_{(K_\cyc)_v},M)\right)\\
&=R_S(M/K_\cyc).
\end{split}
\end{equation}

We also obtain isomorphisms
\[
\begin{aligned}
R(M/K_\cyc)&\isomorph\ker\left(\HF^1(\Gal_{K_\cyc},M)\mto\bigoplus_{v}\HF^1(\Gal_{(K_\cyc)_v},M)\right)\\
&\isomorph\ker\left(\HF^1(\Gal_{K_\cyc},M)\mto\bigoplus_{v\nmid p}\HF^1(I_v,M)\oplus\bigoplus_{v\mid p}\HF^1(\Gal_{K_\cyc},M)\right),
\end{aligned}
\]
with $v$ running through all  primes of $K_\cyc$.
For the first isomorphism, we note that
\[
\Gal_{K_\cyc}=\varprojlim_{T}\Gal_T(K_\cyc),
\]
where $T$ runs through all finite sets of  primes of $K$ containing $S$ and then pass to the direct limit over all such $T$ in equation~\eqref{eqn:idependence of S of the fine Selmer group}. For the second isomorphism, we note from \eqref{eqn:local cohomology for the cyclotomic extension} that
\[
\HF^1(\Gal_{(K_\cyc)_v},M)\mto\HF^1(I_v,M)
\]
is injective for all  primes $v$ of $K_\cyc$ not lying over $p$.

We digress briefly to express $R(M/K_\cyc)$ in terms of \'etale cohomology as follows. Consider $M$ as an \'etale sheaf on $\Spec K_\cyc$. Write $\mathcal{O}_{\cyc}$ for its ring of integers and
\[
\eta\colon\Spec K_\cyc\mto\Spec \mathcal{O}_{\cyc}
\]
for the inclusion of the generic point. Then it follows easily from \cite[Prop.~II.2.9]{Milne:ADT} that
\[
\HF^1_{\et}(\Spec \mathcal{O}_{\cyc},\eta_*M)\isomorph\ker\left(\HF^1(\Gal_S(K_\cyc),M)\mto\bigoplus_{v\in S_{K_\cyc}}\HF^0(\Gal_{k(v)},\HF^1(I_v,M))\right)
\]
and
\[
\begin{split}
R(M/K_\cyc)&\isomorph\im\left(\HF^1_{c}(\Spec \mathcal{O}_{\cyc}[\frac{1}{p}],\eta_*M)\mto\HF^1_{\et}(\Spec \mathcal{O}_{\cyc},\eta_*M)\right)\\
&\isomorph\ker\left(\HF^1_{\et}(\Spec \mathcal{O}_{\cyc},\eta_*M)\mto\bigoplus_{v\mid p}\HF^1(\Gal_{k(v)},M^{I_v})\right).
\end{split}
\]
Under the assumption that $\HF^0(\Gal_{k(v)},M^{I_v})=0$ for all primes $v$ of $K_\cyc$ dividing $p$, we also have
\[
\HF^1(\Gal_{k(v)},M^{I_v})=0
\]
and hence,
\[
R(M/K_\cyc)\isomorph\HF^1_{\et}(\Spec \mathcal{O}_{\cyc},\eta_*M).
\]

Assume that $M$ is countable as a set. Recall from \cite[Prop. 2.1]{CoatesSujatha:FineSelmer} that
\[
\begin{aligned}
\HF^0(\Gal_S(K),(\pdual{M})_{K_\cyc}(1))&=0,\\
\pdual{\left(\bigoplus_{w\in S}\HF^0(\Gal_{K_w},(\pdual{M})_{K_\cyc}(1))\right)}&=\bigoplus_{v\in S_{K_\cyc}}\HF^2(\Gal_{(K_\cyc)_v},M)=0,
\end{aligned}
\]
where $\pdual{M}_{K_\cyc}$ is defined as in \eqref{eqn:def of coinduction}. By passing to the direct limit over all finite subsets of $M$ and all finite subextensions of $K_\cyc/K$, we obtain from the Poitou-Tate sequence an exact sequence of discrete $\Int_p$-modules
\begin{equation}\label{eqn:Poitou-Tate-I}
\begin{split}
0&\mto\HF^0(\Gal_S(K_\cyc),M)\mto\bigoplus_{v\in S_{K_\cyc}}\HF^0(\Gal_{(K_\cyc)_v},M)\\
\mto\pdual{\HF^2(\Gal_S(K),(\pdual{M})_{K_\cyc}(1))}&\mto\HF^1(\Gal_S(K_\cyc),M)\mto\bigoplus_{v\in S_{K_\cyc}}\HF^1(\Gal_{(K_\cyc)_v},M)\\
\mto\pdual{\HF^1(\Gal_S(K),(\pdual{M})_{K_\cyc}(1))}&\mto\HF^2(\Gal_S(K_\cyc),M)\mto 0
\end{split}
\end{equation}
Again, we use that $S_{K_\cyc}$ is a finite set.

Taking the dual, we also obtain an exact sequence of compact $\Int_p$-modules
\begin{equation}\label{eqn:Poitou-Tate-II}
\begin{split}
0\mto\pdual{\HF^2(\Gal_S(K_\cyc),M)}&\mto\HF^1(\Gal_S(K),(\pdual{M})_{K_\cyc}(1))\mto\bigoplus_{v\in S}\HF^1(\Gal_{K_v},(\pdual{M})_{K_\cyc}(1))\\
\mto\pdual{\HF^1(\Gal_S(K_\cyc),M)}&\mto\HF^2(\Gal_S(K),(\pdual{M})_{K_\cyc}(1))\mto\bigoplus_{v\in S}\HF^2(\Gal_{K_v},(\pdual{M})_{K_\cyc}(1))\\
\mto\pdual{\HF^0(\Gal_S(K_\cyc),M)}&\mto 0.
\end{split}
\end{equation}

We will derive yet another useful exact sequence. Recall that $Q$ denotes the quotient field of $\Omega$.

\begin{lem}\label{lem:coefficient sequence}
Assume that $S$ is a finite set of  primes of $K$ containing the primes above $p$. For any $\Gal_S(K)$-representation $M$ on a finite-dimensional vector space over $\FF_p$, there is an exact sequence
\[
\begin{aligned}
  0&\mto \HF^0(\Gal_S(K_\cyc),M)\\
  \mto\HF^1(\Gal_S(K),M_{K_\cyc})\mto Q\tensor_{\Omega}\HF^1(\Gal_S(K),M_{K_\cyc})&\mto\HF^1(\Gal_S(K_\cyc),M)\\ \mto\HF^2(\Gal_S(K),M_{K_\cyc})\mto Q\tensor_{\Omega} \HF^2(\Gal_S(K),M_{K_\cyc})&\mto \HF^2(\Gal_S(K_\cyc),M)\mto 0
  \end{aligned}
\]
\end{lem}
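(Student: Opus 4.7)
The plan is to derive the sequence from the long exact cohomology sequence of a short exact sequence of Kummer type, then pass to a filtered direct limit. Let $\gamma$ be a topological generator of $\Gamma$ and set $T = \gamma - 1 \in \Omega$. Since $M_{K_\cyc} \cong \Omega \tensor_{\FF_p} M$ is a free $\Omega$-module, multiplication by $T^n$ defines an injective $\Gal_S(K)$-equivariant endomorphism with cokernel $M_n := M_{K_\cyc}/T^n M_{K_\cyc}$. The long exact cohomology sequence, together with the vanishing of $\HF^0(\Gal_S(K), M_{K_\cyc}) = \varprojlim_L M^{\Gal_S(L)}$ (the inverse limit stabilizes at the finite $\FF_p$-module $M^{\Gal_S(K_\cyc)}$, on which corestriction acts as multiplication by $[L':L] \in p\Int_p$ once $\Gal(K_\cyc/L)$ acts trivially on this module) and of $\HF^i(\Gal_S(K), M_{K_\cyc}) = 0$ for $i \geq 3$ (by $\cd_p \Gal_S(K) \leq 2$), yields
\[
0 \to \HF^0(\Gal_S(K), M_n) \to \HF^1_{\mathrm{Iw}} \xrightarrow{T^n} \HF^1_{\mathrm{Iw}} \to \HF^1(\Gal_S(K), M_n) \to \HF^2_{\mathrm{Iw}} \xrightarrow{T^n} \HF^2_{\mathrm{Iw}} \to \HF^2(\Gal_S(K), M_n) \to 0,
\]
where $\HF^i_{\mathrm{Iw}} := \HF^i(\Gal_S(K), M_{K_\cyc})$.

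Next I organize these sequences into a filtered direct system indexed by $n$, with the transitions $M_n \to M_{n+1}$ given by $x \mapsto Tx$; these fit into maps of short exact sequences with identity on the first copy of $M_{K_\cyc}$ and multiplication by $T$ on the second. Since filtered colimits preserve exactness, the colimit sequence is again exact. The left copies of $\HF^i_{\mathrm{Iw}}$ (with identity transition) limit to $\HF^i_{\mathrm{Iw}}$; the middle copies (with multiplication-by-$T$ transition) limit to $\HF^i_{\mathrm{Iw}}[1/T] = Q \tensor_\Omega \HF^i_{\mathrm{Iw}}$; and the map between them becomes the canonical localization.

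It remains to identify $\varinjlim_n \HF^i(\Gal_S(K), M_n)$ with $\HF^i(\Gal_S(K_\cyc), M)$. I work cofinally with $n = p^m$: the identity $(1+T)^{p^m} = 1 + T^{p^m}$ in characteristic $p$ shows that $\Omega/T^{p^m} \cong \FF_p[\Gamma/\langle \gamma^{p^m}\rangle]$ as $\Gamma$-algebras, and the anti-involution $\delta \mapsto \delta^{-1}$ of the group ring converts the twisted action on $(\Omega/T^{p^m})^\sharp$ into the standard left-multiplication action. Consequently $M_{p^m} \cong \Ind_{\Gal_S(K_{p^m})}^{\Gal_S(K)} M$ as $\Gal_S(K)$-modules, where $K_{p^m}$ is the subextension of $K_\cyc/K$ with Galois group $\Gamma/\langle \gamma^{p^m}\rangle$. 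Shapiro's lemma then yields $\HF^i(\Gal_S(K), M_{p^m}) \cong \HF^i(\Gal_S(K_{p^m}), M)$, the transitions correspond to the restriction maps, and the colimit equals $\HF^i(\Gal_S(K_\cyc), M)$ by continuity of cohomology with discrete coefficients. The main obstacle is this final compatibility check: verifying that the multiplication-by-$T^{p^{m+1}-p^m}$ transitions on the Iwasawa side translate, under the anti-involution and Shapiro identifications, to the ordinary restriction maps on cohomology of $K_{p^m}$.
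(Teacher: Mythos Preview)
Your proposal is correct and follows essentially the same strategy as the paper. The paper phrases the argument via the derived tensor product of $0\to\Omega\to Q\to Q/\Omega\to 0$ with the cochain complex $C^\bullet(\Gal_S(K),M_{K_\cyc})$, identifying $Q/\Omega\Ltensor_\Omega C^\bullet(\Gal_S(K),M_{K_\cyc})$ with $C^\bullet(\Gal_S(K_\cyc),M)$ through $Q/\Omega=\varinjlim_n t^{-p^n}\Omega/\Omega\cong\varinjlim_n\FF_p[\Gal(K_n/K)]$ and the same Shapiro-type comparison you invoke; your version unfolds this into the explicit direct system of long exact sequences for $0\to M_{K_\cyc}\xrightarrow{T^n}M_{K_\cyc}\to M_n\to 0$, which is the same argument since $Q=\Omega[1/T]$.
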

\begin{proof}
Choose a topological generator $\gamma$ of $\Gamma$ and set $t=\gamma-1$. Write $K_n$ for the intermediate field of $K_\cyc/K$ of degree $p^n$ over $K$. We then have
\[
Q/\Omega=\varinjlim_{n\in\N}\frac{1}{t^{p^n}}\Omega/\Omega
\]
Moreover, we have an isomorphism
\[
\begin{aligned}
\frac{1}{t^{p^n}}\Omega/\Omega&\xrightarrow{\isomorph}\Omega/t^{p^n}\Omega=\FF_p[\Gal(K_n/K)],\\
 \frac{a}{t^{p^n}}+\Omega&\mapsto a+t^{p^n}\Omega.
\end{aligned}
\]
For any compact $\Gal_S(K)$-module $N$, write $\cmplx{C}(\Gal_S(K),N)$ for the continuous homogenous cochain complex.
Clearly, $M_{K_\cyc}$ is finitely generated and free as $\Omega$-module. Hence, there is an isomorphism
\[
\frac{1}{t^{p^n}}\Omega/\Omega\Ltensor_{\Omega}\cmplx{C}(\Gal_S(K),M_{K_\cyc})\isomorph \cmplx{C}(\Gal_S(K_n),M)
\]
in the derived category of complexes of $\Omega$-modules \cite[Prop. 1.6.5]{FK:CNCIT} and one checks that the map induced from the inclusion
\[
\frac{1}{t^{p^n}}\Omega/\Omega\subset \frac{1}{t^{p^{n+1}}}\Omega/\Omega
\]
agrees with the restriction map
\[
\cmplx{C}(\Gal_S(K_n),M)\mto\cmplx{C}(\Gal_S(K_{n+1}),M).
\]
Therefore, we obtain an isomorphism
\[
Q/\Omega\Ltensor_{\Omega}\cmplx{C}(\Gal_S(K),M_{K_\cyc})\isomorph \cmplx{C}(\Gal_S(K_\cyc),M)
\]
in the derived category of complexes of $\Omega$-modules. We take the derived tensor product over $\Omega$ of the exact sequence
\[
0\mto\Omega\mto Q\mto Q/\Omega\mto 0
\]
with $\cmplx{C}(\Gal_S(K),M_{K_\cyc})$ and consider the associated long exact cohomology sequence of the resulting distinguished triangle. Since $Q$ is a flat $\Omega$-module, we conclude
\[
Q\tensor_{\Omega}\HF^n(\Gal_S(K),M_{K_\cyc})=\HF^n(Q\Ltensor_{\Omega}\cmplx{C}(\Gal_S(K),M_{K_\cyc})).
\]
Finally, we recall that
\[
\HF^0(\Gal_S(K),M_{K_\cyc})=0.
\]
\end{proof}

\section{Conjecture A and Equivalent Formulations}\label{sec:Conjecture A}

Let $E$ be an elliptic curve over the number field $K$ and $S$ a finite set of primes of $K$ containing the primes dividing $p$ and the primes of bad reduction of $E$. Recall \cite[Conjecture A]{CoatesSujatha:FineSelmer} for the group $\HF^2(\Gal_S(K),(\Tate_p E)_{K_\cyc})$:

\begin{conj}[$A(K,E{[p]})$]
The group $\HF^2(\Gal_S(K),(\Tate_p E)_{K_\cyc})$ is finitely generated as a $\Int_p$-module. Equivalently, the Pontryagin dual $Y(E[p^\infty]/K_\cyc)$ of the fine Selmer group $R(E[p^\infty]/K_\cyc)$ is finitely generated as a $\Int_p$-module.
\end{conj}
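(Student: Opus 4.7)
The plan is to deduce the equivalence from Poitou--Tate global duality, combined with the invariance of finite generation over $\Int_p$ under cyclotomic Tate twist. A technical wrinkle is that the Poitou--Tate sequence (\ref{eqn:Poitou-Tate-I}) applied to $M = E[p^\infty]$ involves $(\pdual{M})_{K_\cyc}(1) = (\Tate_p E)_{K_\cyc}(1)$ rather than $(\Tate_p E)_{K_\cyc}$, so I will first compare $Y(E[p^\infty]/K_\cyc)$ with $\HF^2(\Gal_S(K), (\Tate_p E)_{K_\cyc}(1))$, then absorb the Tate twist.

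First I would apply (\ref{eqn:Poitou-Tate-I}) to $M = E[p^\infty]$. Since $R(E[p^\infty]/K_\cyc)$ is by definition the kernel of the localization map that appears in that sequence, exactness extracts the short exact sequence
\[ 0 \to \coker\left(\HF^0(\Gal_S(K_\cyc), E[p^\infty]) \to \bigoplus_{v \in S_{K_\cyc}} \HF^0(\Gal_{(K_\cyc)_v}, E[p^\infty])\right) \to \pdual{\HF^2(\Gal_S(K), (\Tate_p E)_{K_\cyc}(1))} \to R(E[p^\infty]/K_\cyc) \to 0. \]
Taking Pontryagin duals yields an injection $Y(E[p^\infty]/K_\cyc) \hookrightarrow \HF^2(\Gal_S(K), (\Tate_p E)_{K_\cyc}(1))$ whose cokernel is a submodule of $\bigoplus_v \pdual{\HF^0(\Gal_{(K_\cyc)_v}, E[p^\infty])}$.

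Next I argue that this cokernel is finitely generated over $\Int_p$. For each $v \in S_{K_\cyc}$, the group $E[p^\infty]((K_\cyc)_v)$ is a subgroup of $E[p^\infty] \cong (\Rat_p/\Int_p)^2$, hence cofinitely generated over $\Int_p$ of corank at most $2$, so its Pontryagin dual is finitely generated over $\Int_p$ of rank at most $2$. Since $S_{K_\cyc}$ is finite, the direct sum is finitely generated over $\Int_p$, and so is the cokernel. Consequently, $Y(E[p^\infty]/K_\cyc)$ is finitely generated over $\Int_p$ if and only if $\HF^2(\Gal_S(K), (\Tate_p E)_{K_\cyc}(1))$ is.

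Finally, I would absorb the Tate twist. The $\Gal_S(K)$-module $(\Tate_p E)_{K_\cyc}(1)$ is $(\Tate_p E)_{K_\cyc} \otimes_{\Int_p} \Int_p(1)$, and $\Gal_S(K)$ acts on $\Int_p(1)$ through the cyclotomic character $\cycchar\colon\Gamma\to\Int_p^\times$. Consequently, $\HF^i(\Gal_S(K), (\Tate_p E)_{K_\cyc}(1))$ is the cyclotomic Tate twist of $\HF^i(\Gal_S(K), (\Tate_p E)_{K_\cyc})$ as $\Lambda$-modules---they share the same underlying $\Int_p$-module, with the $\Gamma$-action rescaled by $\cycchar$---so finite generation over $\Int_p$ is preserved. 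Combined with the previous step, this gives the desired equivalence. The main obstacle is precisely this last step: rigorously establishing that Tate twisting the coinduced coefficient module induces a Tate twist on its Iwasawa cohomology that preserves the underlying $\Int_p$-module structure. This relies on the Hochschild--Serre spectral sequence for $\Gal_S(K_\cyc) \subset \Gal_S(K)$ and the triviality of $\cycchar$ on $\Gal_S(K_\cyc)$; once it is in place, the equivalence follows formally from the Poitou--Tate yoga of the earlier steps.
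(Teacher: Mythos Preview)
Your approach via the Poitou--Tate sequence is the right one, but you have misidentified $\pdual{E[p^\infty]}$ as a Galois module. As an abstract $\Int_p$-module it is indeed $\Tate_p E$, but the Weil pairing gives $\Gal_K$-equivariant isomorphisms $E[p^n]\cong\Hom(E[p^n],\mu_{p^n})$, so that $\pdual{E[p^\infty]}\cong(\Tate_p E)(-1)$ and hence $\pdual{E[p^\infty]}(1)\cong\Tate_p E$. Consequently
\[
(\pdual{M})_{K_\cyc}(1)=(\Tate_p E)_{K_\cyc}
\]
with \emph{no} extra Tate twist. Your ``technical wrinkle'' is therefore illusory: once the Weil pairing is invoked, your steps 1--3 already exhibit $Y(E[p^\infty]/K_\cyc)$ as a submodule of $\HF^2(\Gal_S(K),(\Tate_p E)_{K_\cyc})$ with cokernel finitely generated over $\Int_p$, which is exactly the equivalence claimed.

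Your step 4, besides being unnecessary, is also not correct as written. The cyclotomic character $\cycchar$ is \emph{not} trivial on $\Gal_S(K_\cyc)$ unless $\mu_p\subset K$: the cyclotomic $\Int_p$-extension $K_\cyc$ need not contain the $p$-th roots of unity, and the restriction of $\cycchar$ to $\Gal_S(K_\cyc)$ is the (generally nontrivial) character $\Gal_S(K_\cyc)\to\Gal(K(\mu_p)/K)\hookrightarrow(\Int/p)^\times\subset\Int_p^\times$. So the twisting argument you sketch would not go through without further work. Fortunately the Weil pairing makes the whole step superfluous. The paper itself does not spell out this equivalence at the point where the conjecture is stated; it is implicit in the later Proposition~\ref{prop:residual reps are enough} (take $R=\Int_p$, $M=\Tate_p E$, $\mathcal{L}=K_\cyc$), which again uses the Weil pairing to identify $\pdual{(\Tate_p E)}(1)$ with $E[p^\infty]$.
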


In fact, as Greenberg points out in \cite[Prop. 4.1.6]{Greenberg:ProjectiveModules}, Conjecture $A(K,E[p])$ is also equivalent to the vanishing of $\HF^2(\Gal_S(K_\cyc),E[p])$. In this form, it admits the following sensible generalisation for any $\Gal_K$-representation $M$ on a finite-dimensional vector space over $\FF_p$:

\begin{conj}[$A(K,M)$]
Let $S$ be any finite set of  primes of $K$ containing the primes above $p$ and the primes where $M$ is ramified. Then
\[
 \HF^2(\Gal_S(K_\cyc),M)=0.
\]
\end{conj}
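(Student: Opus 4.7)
The plan is to reduce Conjecture $A(K,M)$ to the classical Iwasawa $\mu = 0$ conjecture by successive d\'evissage. First, I would invoke Lemma~\ref{lem:coefficient sequence}: the rightmost segment of its exact sequence identifies $\HF^2(\Gal_S(K_\cyc),M)$ with the cokernel of the natural map $\HF^2(\Gal_S(K),M_{K_\cyc}) \to Q\tensor_\Omega \HF^2(\Gal_S(K),M_{K_\cyc})$, and this cokernel vanishes precisely when the finitely generated $\Omega$-module $\HF^2(\Gal_S(K),M_{K_\cyc})$ is $\Omega$-torsion. Combined with the Poitou--Tate sequence \eqref{eqn:Poitou-Tate-II}, this recasts the conjecture as a $\mu$-type vanishing statement for the dual fine Selmer module $Y_S(\pdual{M}(1)/K_\cyc)$.

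Second, I would d\'evissage on the $\Gal_K$-module $M$. Any short exact sequence of $\FF_p[\Gal_K]$-representations induces a long exact sequence in $\HF^\bullet(\Gal_S(K_\cyc),\cdot)$; since $\Gal_S(K_\cyc)$ has $p$-cohomological dimension two, the $\HF^2$ functor is right exact, and hence the vanishing for $M$ follows from the vanishing on each Jordan--H\"older factor. For an irreducible factor $M'$, the image of $\Gal_K$ in $\Aut(M')$ is a finite group; letting $F$ be its fixed field, $M'$ becomes trivial as a $\Gal_F$-module. Applying the Hochschild--Serre spectral sequence to the finite Galois extension $F\cdot K_\cyc/K_\cyc$ then reduces the question to the vanishing of $\HF^2(\Gal_S(F\cdot K_\cyc),\FF_p)$, up to contributions from the higher cohomology of $\Gal(F\cdot K_\cyc/K_\cyc)$, which vanish when the degree is prime to $p$ and can otherwise be absorbed by a secondary d\'evissage on this finite Galois group.

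The final reduction identifies $\HF^2(\Gal_S(F\cdot K_\cyc),\FF_p)$, via Kummer theory and class field theory, with a quotient of the classical Iwasawa module parametrising the maximal unramified abelian pro-$p$ extension of the cyclotomic tower over $F$; its vanishing is exactly Iwasawa's $\mu = 0$ conjecture, known by Ferrero--Washington when $F$ is abelian over $\Rat$ but open in general. This is the main obstacle: the earlier reductions are essentially formal Galois-cohomological manipulations, while the $\mu = 0$ conjecture is a deep and celebrated open problem. Any unconditional proof of $A(K,M)$ for arbitrary $K$ and $M$ would therefore have to contribute a new input to the $\mu = 0$ question, or else exploit arithmetic features specific to $M$ (as in the treatment of CM elliptic curves in Section~\ref{sec:IsogenyInvariance}), in agreement with the thesis that Conjecture $A$ is potentially as hard as Iwasawa's $\mu = 0$ conjecture.
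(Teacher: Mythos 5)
This statement is a conjecture, not a theorem: the paper offers no proof of $A(K,M)$, and none is known. You have correctly recognised this, and your ``proof'' is in substance the paper's own reduction of the conjecture to Iwasawa's $\mu=0$ conjecture: the first step is exactly the equivalence \eqref{enum:A1}$\Leftrightarrow$\eqref{enum:A2} of Theorem~\ref{thm:equivalent forms of Conjecture A} via Lemma~\ref{lem:coefficient sequence} and the Poitou--Tate sequence; the d\'evissage to Jordan--H\"older factors is Lemma~\ref{lem:A under quotiens and extensions}(3); the descent from a trivialising field $F$ is Lemma~\ref{lem:A under field extensions}; and the identification of $\HF^2(\Gal_S(F_\cyc),\FF_p)=0$ with $\mu(X_S(F))=0$ is Proposition~\ref{prop:relation A and mu equal zero}(2). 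Your honest conclusion that the remaining input is the open $\mu=0$ conjecture is precisely the paper's thesis. One small technical remark: for the descent step $A(F,M)\Rightarrow A(K,M)$ the Hochschild--Serre spectral sequence for $\Gal(F\cdot K_\cyc/K_\cyc)$ is awkward when $p$ divides the degree, because the terms $\HF^1(\Delta,\HF^1(-))$ and $\HF^2(\Delta,\HF^0(-))$ need not vanish; the clean argument, used in Lemma~\ref{lem:A under field extensions}(1), is that the corestriction $\HF^2(\Gal_S(F_\cyc),M)\to\HF^2(\Gal_S(K_\cyc),M)$ is surjective because $\cd_p\Gal_S(F_\cyc)\leq 2$. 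With that substitution your reduction is complete and agrees with the paper.
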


We compile a list of equivalent reformulations.

\begin{thm}\label{thm:equivalent forms of Conjecture A}
Let $K$ be a number field and $M$ be a $\Gal_K$-representation on a finite-dimensional vector space over $\FF_p$. Assume that
$S$ is a finite set of  primes of $K$ containing the primes above $p$ and the primes where $M$ is ramified. The following statements are equivalent:
\begin{enumerate}
 \item\label{enum:A1} $\HF^2(\Gal_S(K_\cyc),M)=0$.
 \item\label{enum:A2} $\HF^2(\Gal_S(K),M_{K_\cyc})$ is finite.
 \item\label{enum:A3} $\rk_{\Omega}\HF^1(\Gal_S(K),M_{K_\cyc})=r_{M}$.
 \item\label{enum:A4} $\rk_{\Omega}\pdual{\HF^1(\Gal_S(K_\cyc),M)}=r_{M}$.
 \item\label{enum:A5} $\HF^1(\Gal_S(K),\pdual{M}_{K_\cyc}(1))\mto\bigoplus_{v\in S}\HF^1(\Gal_{K_v},\pdual{M}_{K_\cyc}(1))$ is injective.
 \item\label{enum:A6} The fine Selmer group $R(\pdual{M}(1)/K_\cyc)$ is finite.
 \item\label{enum:A7} The inflation map $\HF^2(\Gal_S(K_\cyc),M)\mto\HF^2(\Gal_T(K_\cyc),M)$ is injective for every finite set of  primes $T$ containing $S$.
 \item\label{enum:A8} The Gysin map $\HF^1(\Gal_{K_\cyc},M)\mto\bigoplus_{v\not\in S_{K_\cyc}}\HF^0(\Gal_{k(v)},M(-1))$ is surjective.
 \end{enumerate}
 \end{thm}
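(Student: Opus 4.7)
My plan is to prove all eight equivalences by combining the three exact sequences already available in the paper---the coefficient sequence of Lemma~\ref{lem:coefficient sequence}, the Poitou--Tate sequences~\eqref{eqn:Poitou-Tate-I} and~\eqref{eqn:Poitou-Tate-II}, and the Gysin sequence~\eqref{eqn:Gysin sequence}---together with the observation that the profinite group algebra $\Omega=\FF_p[[\Gamma]]\isomorph\FF_p[[T]]$ is a regular local ring of Krull dimension one, so every finitely generated $\Omega$-torsion module is finite and, by Matlis duality, $\pdual{(Q/\Omega)}\isomorph\Omega$.

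The chain \eqref{enum:A1}$\Leftrightarrow$\eqref{enum:A2}$\Leftrightarrow$\eqref{enum:A3} is read off from the coefficient sequence. Its tail
\[
\HF^2(\Gal_S(K),M_{K_\cyc})\to Q\tensor_\Omega \HF^2(\Gal_S(K),M_{K_\cyc})\to \HF^2(\Gal_S(K_\cyc),M)\to 0
\]
shows that \eqref{enum:A1} is equivalent to surjectivity of the localization map, which holds exactly when $\HF^2(\Gal_S(K),M_{K_\cyc})$ is $\Omega$-torsion, hence finite. For \eqref{enum:A2}$\Leftrightarrow$\eqref{enum:A3} I would invoke the Iwasawa-theoretic Euler characteristic formula $\rk_\Omega \HF^1-\rk_\Omega \HF^2=r_M$, which follows from Tate's layer-wise formula $h^0-h^1+h^2=-p^n r_M$ (archimedean places split completely in the $p$-odd cyclotomic $\Int_p$-extension, giving the $p^n$-factor), the growth estimate $\dim_{\FF_p}N/(\gamma^{p^n}-1)N=(\rk_\Omega N)p^n+O(1)$ for finitely generated $N$, and the vanishing $\HF^0(\Gal_S(K),M_{K_\cyc})=0$.

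For \eqref{enum:A3}$\Leftrightarrow$\eqref{enum:A4} I would remain inside the coefficient sequence and extract the short exact sequence $0\to A\to\HF^1(\Gal_S(K_\cyc),M)\to T\to 0$, where $A$ is the image of $Q\tensor_\Omega\HF^1(\Gal_S(K),M_{K_\cyc})$, pseudo-isomorphic to $(Q/\Omega)^r$ with $r=\rk_\Omega\HF^1(\Gal_S(K),M_{K_\cyc})$, and $T$ sits inside the finite $\Omega$-torsion submodule of $\HF^2(\Gal_S(K),M_{K_\cyc})$. Pontryagin dualizing and using $\pdual{(Q/\Omega)}\isomorph\Omega$ then yields $\rk_\Omega\pdual{\HF^1(\Gal_S(K_\cyc),M)}=r$ unconditionally. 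The equivalence \eqref{enum:A1}$\Leftrightarrow$\eqref{enum:A5} is immediate from~\eqref{eqn:Poitou-Tate-II}, which identifies the kernel of the localization in \eqref{enum:A5} with $\pdual{\HF^2(\Gal_S(K_\cyc),M)}$. Likewise, \eqref{enum:A2}$\Leftrightarrow$\eqref{enum:A6} comes from substituting $\pdual{M}(1)$ for $M$ in~\eqref{eqn:Poitou-Tate-I} (using $\pdual{\pdual{M}(1)}(1)\isomorph M$); the resulting sequence exhibits $R(\pdual{M}(1)/K_\cyc)$ as the image of $\pdual{\HF^2(\Gal_S(K),M_{K_\cyc})}$ in $\HF^1(\Gal_S(K_\cyc),\pdual{M}(1))$, modulo the finite group coming from $\bigoplus_{v\in S_{K_\cyc}}\HF^0(\Gal_{(K_\cyc)_v},\pdual{M}(1))$, so finiteness of $R(\pdual{M}(1)/K_\cyc)$ is equivalent to finiteness of $\HF^2(\Gal_S(K),M_{K_\cyc})$.

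For \eqref{enum:A7}$\Leftrightarrow$\eqref{enum:A8}$\Leftrightarrow$\eqref{enum:A1}, the Gysin sequence~\eqref{eqn:Gysin sequence} identifies the kernel of the inflation in \eqref{enum:A7} with the image of $\bigoplus_{v\in(T-S)_{K_\cyc}}\HF^0(\Gal_{k(v)},M(-1))$, so \eqref{enum:A7} for every finite $T\supset S$ is equivalent to surjectivity of the Gysin map at each level, and a cofinality argument in $T$ (lifting any class to some finite $T'$ and observing that the absence of contribution at $v\in T'\setminus T$ means the class was already unramified outside $T$) reduces this to the single surjectivity statement \eqref{enum:A8}. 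The implication \eqref{enum:A1}$\Rightarrow$\eqref{enum:A7} is trivial; for the converse I would pass to the direct limit over $T$ in~\eqref{eqn:Gysin sequence} to obtain
\[
\bigoplus_{v\not\in S_{K_\cyc}}\HF^0(\Gal_{k(v)},M(-1))\to\HF^2(\Gal_S(K_\cyc),M)\to\HF^2(\Gal_{K_\cyc},M)\to 0,
\]
so \eqref{enum:A8} forces $\HF^2(\Gal_S(K_\cyc),M)\hookrightarrow\HF^2(\Gal_{K_\cyc},M)$. The step I expect to be the main obstacle is then the vanishing $\HF^2(\Gal_{K_\cyc},M)=0$ for $p$ odd and finite $p$-primary $M$: I would obtain this via a Hochschild--Serre argument for the extension $L=K_\cyc(\mu_{p^\infty})/K_\cyc$, whose Galois group has order prime to $p$, combined with Tate's classical cohomological vanishing $\cd_p\Gal_L\leq 1$ for fields containing all $p$-power roots of unity, citing \cite[Thm.~7.1.8]{NSW:CohomNumFields} or a comparable reference.
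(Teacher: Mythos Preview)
Your proof is correct and follows essentially the same strategy as the paper: the coefficient sequence of Lemma~\ref{lem:coefficient sequence} together with the Euler characteristic formula for \eqref{enum:A1}--\eqref{enum:A4}, the Poitou--Tate sequences \eqref{eqn:Poitou-Tate-I} and \eqref{eqn:Poitou-Tate-II} for \eqref{enum:A5} and \eqref{enum:A6}, and the Gysin sequence together with the vanishing $\HF^2(\Gal_{K_\cyc},M)=0$ for \eqref{enum:A7} and \eqref{enum:A8}. The only difference is cosmetic: the paper quotes this last vanishing directly from \cite[Cor.~8.1.18]{NSW:CohomNumFields}, whereas you sketch it via the prime-to-$p$ extension $K_\cyc(\mu_{p^\infty})/K_\cyc$ and $\cd_p\leq 1$ over that field---a valid route, though you should verify that the theorem number you cite in \cite{NSW:CohomNumFields} actually points to the global statement you need rather than to local duality.
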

 \begin{proof}
 The equivalence of \eqref{enum:A1}, \eqref{enum:A2}, \eqref{enum:A3}, \eqref{enum:A4} is an immediate consequence of the sequence in Lemma~\ref{lem:coefficient sequence} and the Euler characteristic formula
\[
 \rk_{\Omega}\HF^1(\Gal_S(K),M_{K_\cyc})-\rk_{\Omega}\HF^2(\Gal_S(K),M_{K_\cyc})=r_M.
\]
The equivalence of \eqref{enum:A1} and \eqref{enum:A5} follows easily from the Poitou-Tate sequence \eqref{eqn:Poitou-Tate-II}.
Similarly, we may consider the Poitou-Tate sequence \eqref{eqn:Poitou-Tate-I}, with $M$ replaced by $\pdual{M}(1)$. Since the first two terms are finite groups, the equivalence of \eqref{enum:A2} and \eqref{enum:A6} is immediate.

The equivalence of \eqref{enum:A1}, \eqref{enum:A7}, and \eqref{enum:A8} can be seen as follows. Recall that
\[
\HF^k(\Gal_{K_\cyc},M)=\varinjlim_{T}\HF^k(\Gal_T(K_\cyc),M)
\]
where $T$ runs through all finite sets of primes of $K$ containing $S$. Moreover,
\[
\HF^2(\Gal_{K_\cyc},M)=0
\]
by \cite[Cor. 8.1.18]{NSW:CohomNumFields}. Trivially, \eqref{enum:A1} implies \eqref{enum:A7}. If we assume \eqref{enum:A7}, then $\HF^2(G_S(K_\cyc),M)$ injects into $\HF^2(\Gal_{K_\cyc},M)$ and is therefore the trivial group. So, \eqref{enum:A1} and \eqref{enum:A7} are equivalent.

Passing to the direct limit over $T$ of the Gysin sequences \eqref{eqn:Gysin sequence} we obtain the exact sequence
\[
\begin{split}
0&\mto\HF^1(\Gal_S(K_\cyc),M)\mto\HF^1(\Gal_{K_\cyc},M)\mto\bigoplus_{v\not\in S_{K_\cyc}}\HF^0(\Gal_{k(v)},M(-1))\\
 &\mto\HF^2(\Gal_S(K_\cyc),M)\mto 0
\end{split}
\]
From this sequence, we can easily read off the equivalence of \eqref{enum:A1} and \eqref{enum:A8}.
\end{proof}

\begin{rem}
As $R(\pdual{M}(1)/K_\cyc)$ does not depend on $S$, we also see that Conjecture $A(K,M)$ is independent of the particular choice of $S$.
\end{rem}

Recall that $M^{\sesi}$ denotes the semisimplification of $M$.

\begin{lem}\label{lem:A under quotiens and extensions}
Let $M$ be a representation of $\Gal_K$ on a finite-dimensional vector space over $\FF_p$ and $N\subset M$ be a subrepresentation.
\begin{enumerate}
\item $A(K,M)$ implies $A(K,M/N)$.
\item The conjunction of $A(K,N)$ and $A(K,M/N)$ imply $A(K,M)$.
\item $A(K,M^{\sesi})$ implies $A(K,M)$.
\end{enumerate}
\end{lem}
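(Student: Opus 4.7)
The plan is to derive all three statements from the long exact cohomology sequence associated to the short exact sequence $0\mto N\mto M\mto M/N\mto 0$ of $\Gal_S(K)$-modules, where $S$ is a finite set of primes containing those above $p$ together with those where $M$ is ramified. Since subrepresentations and quotients of representations unramified outside $S$ remain unramified outside $S$, the same $S$ works for $N$, $M/N$, and the simple composition factors appearing in $M^{\sesi}$. The crucial input is the bound $\cd_p\Gal_S(K_\cyc)\leq 2$, which holds because $p$ is odd: one has $\cd_p\Gal_S(K)\leq 2$ by \cite[Prop.~8.3.18]{NSW:CohomNumFields}, and $\Gal_S(K_\cyc)$ is a closed subgroup of $\Gal_S(K)$, so its $p$-cohomological dimension is no larger. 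It follows that $\HF^3(\Gal_S(K_\cyc),N)=0$, so the long exact sequence terminates in the three-term exact sequence
\[
\HF^2(\Gal_S(K_\cyc),N)\mto\HF^2(\Gal_S(K_\cyc),M)\mto\HF^2(\Gal_S(K_\cyc),M/N)\mto 0.
\]

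Parts (1) and (2) are immediate from this exact sequence. For (1), surjectivity of the right-hand map forces $\HF^2(\Gal_S(K_\cyc),M/N)=0$ as soon as $\HF^2(\Gal_S(K_\cyc),M)=0$. For (2), if the two outer groups vanish, so does the middle group.

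For part (3), I would argue by induction on the length of a Jordan--H\"older filtration $0=M_0\subset M_1\subset\cdots\subset M_n=M$ of $M$ by $\Gal_K$-subrepresentations with simple successive quotients $M_i/M_{i-1}$. By definition of the semisimplification, $M^{\sesi}\isomorph\bigoplus_{i=1}^n M_i/M_{i-1}$, and since $\HF^2(\Gal_S(K_\cyc),-)$ commutes with finite direct sums, $A(K,M^{\sesi})$ is equivalent to $A(K,M_i/M_{i-1})$ holding for each $i$. Starting from the trivial case $A(K,M_0)$ and applying part (2) inductively to the short exact sequences $0\mto M_{i-1}\mto M_i\mto M_i/M_{i-1}\mto 0$ yields $A(K,M_n)=A(K,M)$.

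The only nontrivial ingredient is the cohomological dimension bound $\cd_p\Gal_S(K_\cyc)\leq 2$; once this is in hand, the rest of the argument is a purely formal exercise in long exact sequences and d\'evissage, and I do not anticipate any genuine obstacle.
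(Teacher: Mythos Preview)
Your proposal is correct and follows essentially the same approach as the paper: both use the long exact cohomology sequence for $0\mto N\mto M\mto M/N\mto 0$ together with the bound $\cd_p\Gal_S(K_\cyc)\leq 2$ to handle (1) and (2), and then obtain (3) from (2) by induction along a Jordan--H\"older filtration. Your write-up simply supplies more detail (the justification of the cohomological-dimension bound and the unpacking of $A(K,M^{\sesi})$ as the conjunction of the $A(K,M_i/M_{i-1})$).
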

\begin{proof}
The first two assertions follow easily from the long exact cohomology sequence associated to the short exact sequence
\[
0\mto N\mto M\mto M/N\mto 0
\]
and the fact that the $p$-cohomological dimension of $\Gal_S(K_\cyc)$ is at most $2$. The third assertion follows from the second by induction over the filtration steps of a Jordan-H\"older series for $M$.
\end{proof}

\begin{cor}
Assume that $E/K$ has a $p$-torsion point defined over $K$. Then the conjunction of $A(K,\mu_p)$ and $A(K,\FF_p)$ implies $A(K,E[p])$ and $A(K,E[p])$ implies $A(K,\mu_p)$. If in addition $K$ contains $\mu_p$, then $A(K,\mu_p)$, $A(K,\FF_p)$, and $A(K,E[p])$ are equivalent.
\end{cor}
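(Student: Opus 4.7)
The plan is to exhibit $E[p]$ as an extension of $\mu_p$ by $\FF_p$ and then invoke Lemma~\ref{lem:A under quotiens and extensions}. A $K$-rational $p$-torsion point $P \in E(K)[p]$ provides a $\Gal_K$-equivariant inclusion $\FF_p \hookrightarrow E[p]$ sending $1 \mapsto P$. The quotient $E[p]/\FF_p$ is a one-dimensional $\FF_p$-representation of $\Gal_K$ whose character is the determinant $\det_{\FF_p} E[p]$. By the Weil pairing, this determinant is isomorphic to $\mu_p$, so we obtain a short exact sequence of $\Gal_K$-representations
\[
 0 \mto \FF_p \mto E[p] \mto \mu_p \mto 0.
\]

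With this sequence in hand, parts (1) and (2) of Lemma~\ref{lem:A under quotiens and extensions} yield both desired implications directly. Part~(2) applied to $N=\FF_p$, $M=E[p]$, $M/N=\mu_p$ gives that $A(K,\FF_p)$ together with $A(K,\mu_p)$ implies $A(K,E[p])$. Part~(1) applied to the same sequence gives that $A(K,E[p])$ implies $A(K,\mu_p)$.

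For the final statement, if $\mu_p \subset K$ then $\Gal_K$ acts trivially on $\mu_p$, so $\mu_p \isomorph \FF_p$ as $\Gal_K$-modules and hence $A(K,\mu_p) \Leftrightarrow A(K,\FF_p)$. Combining with the two implications above shows that under the hypothesis $\mu_p \subset K$, the three statements $A(K,\mu_p)$, $A(K,\FF_p)$, and $A(K,E[p])$ are pairwise equivalent. There is no substantive obstacle; the only point requiring slight care is the identification of the quotient $E[p]/\FF_p$ with $\mu_p$ via the Weil pairing, and this is standard.
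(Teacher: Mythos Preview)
Your proof is correct and follows essentially the same approach as the paper: both construct the short exact sequence $0 \mto \FF_p \mto E[p] \mto \mu_p \mto 0$ from the $K$-rational $p$-torsion point and the Weil pairing, and then invoke Lemma~\ref{lem:A under quotiens and extensions}. You spell out the application of parts (1) and (2) of the lemma and the final equivalence a bit more explicitly than the paper does, but there is no substantive difference.
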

\begin{proof}
The $p$-torsion point generates a submodule $T$ of $E[p]$ of dimension $1$ over $\FF_p$ with trivial $\Gal_K$-action. Moreover, the Weil pairing on $E[p]$ implies that $E[p]/T\isomorph \mu_p$. Hence, we have an exact sequence of $\Gal_K$-modules
\[
0\mto \FF_p\mto E[p]\mto \mu_p\mto 0.
\]
The claim now follows from Lemma~\ref{lem:A under quotiens and extensions}.
\end{proof}

\begin{rem}
We will explain in Proposition~\ref{prop:relation A and mu equal zero} the relation of $A(K,\mu_p)$ and $A(K,\FF_p)$ with Iwasawa's conjecture on the vanishing of the $\mu$-invariant.
\end{rem}

We investigate how Conjecture $A(K,M)$ behaves under field extensions.

\begin{lem}\label{lem:A under field extensions}
Let $L/K$ be a finite field extension and $M$ a $\Gal_K$-representation on a finite-dimensional vector space over $\FF_p$.
\begin{enumerate}
 \item\label{enum:field extension not Galois} If Conjecture $A(L,M)$ is true, then so is $A(K,M)$.
 \item\label{enum:field extension induced} If $M$ is the induced representation of a $\Gal_L$-representation $N$, then Conjecture $A(K,M)$ is true if and only if $A(L,N)$ is true.
 \item\label{enum:field extension Galois} If $L/K$ is Galois with Galois group $\Delta$, then $A(L,M)$ is true if and only if $A(K,\rho\tensor_{\FF_p}M)$ is true for every finite-dimensional (every simple) $\FF_p$-representation $\rho$ of $\Delta$.
 \item\label{enum:field extension pro p} If $L/K$ is a Galois extension of order a power of $p$, then $A(L,M)$ is true if and only if $A(K,M)$ is true.
\end{enumerate}
 \end{lem}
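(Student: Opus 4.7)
The plan is to prove~(\ref{enum:field extension induced}) first and then derive the other three parts from it. Enlarging $S$ to contain the primes ramified in $L/K$ does not affect any of the statements (by the remark after Theorem~\ref{thm:equivalent forms of Conjecture A}), so I assume throughout that $L\subset K_S$.

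For~(\ref{enum:field extension induced}), with $M=\Ind_{\Gal_L}^{\Gal_K}N$, I will restrict from $\Gal_S(K)$ to its normal subgroup $\Gal_S(K_\cyc)$ via Mackey's double coset decomposition. The double cosets $\Gal_S(K_\cyc)\backslash\Gal_S(K)/\Gal_S(L)$ are in bijection with the finite group $\Gal(L\cap K_\cyc/K)$, and for each representative $\sigma$ one checks $\Gal_S(K_\cyc)\cap\sigma\Gal_S(L)\sigma^{-1}=\Gal_S((\sigma L)_\cyc)$ (using that $\sigma K_\cyc=K_\cyc$). Shapiro's lemma applied to each finite-index summand then yields
\[
\HF^2(\Gal_S(K_\cyc),M)\isomorph\bigoplus_{\sigma}\HF^2(\Gal_S((\sigma L)_\cyc),\sigma N),
\]
and conjugation by $\sigma$ identifies each summand with $\HF^2(\Gal_S(L_\cyc),N)$. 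Hence the left-hand side vanishes iff $\HF^2(\Gal_S(L_\cyc),N)$ does, which by the equivalence~(\ref{enum:A1}) of Theorem~\ref{thm:equivalent forms of Conjecture A} is~(\ref{enum:field extension induced}).

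Part~(\ref{enum:field extension not Galois}) then follows: the multiplication map $\FF_p[\Gal_K]\tensor_{\FF_p[\Gal_L]}M\twoheadrightarrow M$, $g\tensor m\mapsto gm$, is a $\Gal_K$-equivariant surjection from $\Ind_{\Gal_L}^{\Gal_K}M$ onto $M$, so Lemma~\ref{lem:A under quotiens and extensions}(i) gives $A(K,\Ind_{\Gal_L}^{\Gal_K}M)\Rightarrow A(K,M)$, and the hypothesis $A(L,M)$ coincides with $A(K,\Ind_{\Gal_L}^{\Gal_K}M)$ by~(\ref{enum:field extension induced}). For~(\ref{enum:field extension Galois}), when $L/K$ is Galois the standard identification $\Ind_{\Gal_L}^{\Gal_K}M\isomorph M\tensor_{\FF_p}\FF_p[\Delta]$ of $\Gal_K$-modules reduces the assertion via~(\ref{enum:field extension induced}) to the equivalence $A(K,M\tensor\FF_p[\Delta])\iff A(K,\rho\tensor M)$ for all simple $\FF_p[\Delta]$-modules $\rho$. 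For ``$\Leftarrow$'' I take a composition series $0=V_0\subset\dots\subset V_n=\FF_p[\Delta]$, tensor over $\FF_p$ with $M$ to obtain a filtration whose graded pieces are the $M\tensor(V_i/V_{i-1})$, and apply Lemma~\ref{lem:A under quotiens and extensions}(ii) iteratively. For ``$\Rightarrow$'' (for any finite-dimensional $\rho$, not only the simple ones) I write $\rho$ as a quotient of $\FF_p[\Delta]^n$, so that $M\tensor\rho$ is a quotient of $(M\tensor\FF_p[\Delta])^n$; property $A$ passes to finite direct sums by~(ii) and then to the quotient by~(i).

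Part~(\ref{enum:field extension pro p}) is then immediate: when $\Delta$ is a finite $p$-group, every nonzero $\FF_p[\Delta]$-module has a nonzero $\Delta$-fixed vector, so $\FF_p$ is the unique simple $\FF_p[\Delta]$-module, whence $\rho\tensor M\isomorph M$ in the statement of~(\ref{enum:field extension Galois}). The only subtlety in this whole plan is the Mackey bookkeeping in~(\ref{enum:field extension induced}) when $L/K$ is not Galois and the conjugate fields $\sigma L$ genuinely differ from $L$; but since each summand is canonically identified with $\HF^2(\Gal_S(L_\cyc),N)$ by the conjugation isomorphism, this is a formal verification rather than a real obstacle.
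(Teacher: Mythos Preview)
Your proof is correct. The main structural difference from the paper is the order in which the parts are established and how (\ref{enum:field extension not Galois}) is obtained. The paper proves (\ref{enum:field extension not Galois}) first and directly, using that the corestriction
\[
\HF^2(\Gal_S(L_\cyc),M)\mto\HF^2(\Gal_S(K_\cyc),M)
\]
is surjective because $\cd_p\Gal_S(K_\cyc)\leq 2$; it then invokes (\ref{enum:field extension not Galois}) in the forward direction of (\ref{enum:field extension Galois}). You instead establish (\ref{enum:field extension induced}) first via the Mackey decomposition and derive (\ref{enum:field extension not Galois}) from it through the surjection $\Ind_{\Gal_L}^{\Gal_K}M\twoheadrightarrow M$ together with Lemma~\ref{lem:A under quotiens and extensions}. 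The paper's argument for (\ref{enum:field extension induced}) is in fact the same Mackey computation, stated more tersely as ``$M=(M')^n$ as $\Gal_{K_\cyc}$-representations''. Both routes are short; yours trades the cohomological-dimension input for the purely representation-theoretic observation that $M$ is a quotient of its own induction, so that everything flows from a single Shapiro/Mackey step. The remaining parts (\ref{enum:field extension Galois}) and (\ref{enum:field extension pro p}) are handled essentially the same way in both proofs.
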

\begin{proof}
Choose $S$ large enough such that $L/K$ and $M$ are unramified outside $S$. Since the $p$-cohomological dimension of $\Gal_S(K_\cyc)$ is less or equal to $2$, the corestriction map
\[
\HF^2(\Gal_S(L_\cyc),M)\mto\HF^2(\Gal_S(K_\cyc),M)
\]
is surjective \cite[Prop. 3.3.11]{NSW:CohomNumFields}. This proves \eqref{enum:field extension not Galois}.

Let $n$ be the degree of $L\cap K_\cyc/K$ and assume that $M$ is induced by the $\Gal_L$-representation $N$. Let $M'$ be the $\Gal_{K_\cyc}$-representation induced by the $\Gal_{L_\cyc}$-representation $N$. Then $M=(M')^n$ as $\Gal_{K_\cyc}$-representations. By Shapiro's Lemma we have
\[
\HF^2(\Gal_S(L_\cyc),N^n)=\HF^2(\Gal_S(K_\cyc),(M')^n)
\]
Hence $A(K,M)$ is equivalent to $A(L,N^n)$, which is clearly equivalent to $A(L,N)$. This proves
\eqref{enum:field extension induced}.

We prove \eqref{enum:field extension Galois}. Let $L/K$ be Galois with Galois group $\Delta$ and $\rho$ a $\FF_p$-representation of $\Delta$ of dimension $k$ over $\FF_p$. Then $\rho$ is trivial as a $\Gal_S(L_\cyc)$-representation. In particular,
\[
\HF^2(\Gal_S(L_\cyc),\rho\tensor_{\FF_p}M)=\HF^2(\Gal_S(L_\cyc),M)^{k}
\]
and Conjecture $A(L,M)$ is true if and only if Conjecture $A(L,\rho\tensor_{\FF_p}M)$ is true. Combining this with \eqref{enum:field extension not Galois}, we see that $A(L,M)$ implies $A(K,M\tensor_{\FF_p}\rho)$.

Suppose that $A(K,M\tensor_{\FF_p}\rho)$ is true for every simple $\FF_p$-representation of $\Delta$. The induced $\Gal_K$-representation of the restriction of $M$ to $\Gal_L$ is $\FF_p[\Delta]^\sharp\tensor_{\FF_p}M$. By induction on the length of a Jordan-H\"older series of $\FF_p[\Delta]$ we see that $A(K,\FF_p[\Delta]^\sharp\tensor_{\FF_p}M)$ is true. Hence, $A(L,M)$ is also true by \eqref{enum:field extension induced}.

For the proof of \eqref{enum:field extension pro p} it remains to note that $\FF_p$ is the only simple $\Delta$-representation if the order of $\Delta$ is a power of $p$.
\end{proof}

The following proposition is essentially well-known, see for example \cite[Lem.~8]{Schneider:HeightPairingII}.

\begin{prop}
The following are equivalent:
\begin{enumerate}
 \item\label{enum:A all fields all M} Conjecture $A(K,M)$ holds for all number fields $K$ and all $\Gal_K$-repre\-sen\-ta\-tions $M$.
 \item\label{enum:A all simple M} Conjecture $A(K,M)$ holds for a fixed field $K$ and all simple $\Gal_K$-repre\-sen\-ta\-tions $M$.
 \item\label{enum:A all fields} Conjecture $A(L,M)$ holds for all finite extensions $L/K$ for a fixed field $K$ and a fixed $\Gal_K$-repre\-sen\-ta\-tion $M$.
 \item\label{enum:A p-dim one exist} The $p$-cohomological dimension of $\Gal_S(K_\cyc)$ is $1$ for some number field $K$ and some finite set $S$ of primes containing the primes above $p$.
 \item\label{enum:A p-dim one for all} The $p$-cohomological dimension of $\Gal_S(K_\cyc)$ is $1$ for every number field $K$ and every finite set $S$ of primes containing the primes above $p$.
\end{enumerate}
\end{prop}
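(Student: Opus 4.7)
The plan is to establish the equivalences by a cycle of implications. The trivial ones are (1) $\Rightarrow$ (2), (1) $\Rightarrow$ (3), (5) $\Rightarrow$ (4), and (5) $\Rightarrow$ (1); the last holds because $\cd_p \Gal_S(K_\cyc) \leq 1$ forces $\HF^2(\Gal_S(K_\cyc), M) = 0$ on every $p$-torsion module. It then suffices to show (2) $\Rightarrow$ (1), (1) $\Rightarrow$ (5), (3) $\Rightarrow$ (4), and (4) $\Rightarrow$ (2). The key structural ingredient I would use throughout is that any finite Galois extension $F$ of $K_\cyc$ inside $K_S$ has the form $F = L_\cyc$ for a finite Galois extension $L/K$: the finitely many generators of $F/K_\cyc$ satisfy polynomials whose coefficients lie in some finite level $K_n$ of the cyclotomic tower, so we may take $L$ to be the finite Galois extension of $K_n$ generated by those generators, after which $L \cdot K_\cyc = L_\cyc = F$.

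For (4) $\Rightarrow$ (2) I would invoke the surjection in the Gysin sequence~\eqref{eqn:Gysin sequence}, which shows that the bound $\cd_p \Gal_{S_0}(K_{0,\cyc}) \leq 1$ persists under enlargement of $S_0$. Hence $A(K_0, M)$ holds for every $\Gal_{K_0}$-representation $M$, which is (2) with $K = K_0$. For (2) $\Rightarrow$ (1), Lemma~\ref{lem:A under quotiens and extensions}(3) upgrades (2) from simple representations to all $\Gal_K$-representations, and Lemma~\ref{lem:A under field extensions}(2) then promotes $A(K, \Ind_{K'}^K M')$ to $A(K', M')$ for any finite $K'/K$ and any $\Gal_{K'}$-representation $M'$; for a number field $K''$ not containing $K$, I would first handle the compositum $K \cdot K''$ (finite over $K$) and then descend using Lemma~\ref{lem:A under field extensions}(1).

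For (1) $\Rightarrow$ (5), fix $K, S$ and let $N$ be a simple $\FF_p[\Gal_S(K_\cyc)]$-module. By the structural ingredient above, the open kernel of the action corresponds to $F = L_\cyc$ for some finite Galois $L/K$. Set $K_m = L \cap K_\cyc$, a finite subextension of $K_\cyc/K$; then $(K_m)_\cyc = K_\cyc$ and the isomorphism $\Gal(L_\cyc/K_\cyc) \cong \Gal(L/K_m)$ presents $N$ as an inflation of a $\Gal_{K_m}$-representation. Applying (1) to the pair $(K_m, N)$ kills $\HF^2(\Gal_S(K_\cyc), N)$, and varying $N$ forces $\cd_p \Gal_S(K_\cyc) \leq 1$. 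The reverse inequality follows from the nontriviality of $\Gal_S(K_\cyc)$ (for instance by class field theory).

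For (3) $\Rightarrow$ (4), let $L_0$ be the fixed field of $\ker(\Gal_K \to \Aut(M))$, a finite Galois extension of $K$ over which $M$ becomes trivial; then $A(L, M) \Leftrightarrow A(L, \FF_p)$ for every finite $L/L_0$, so (3) delivers $A(L, \FF_p)$ for all such $L$. Lemma~\ref{lem:A under field extensions}(3) applied with trivial coefficient then produces $A(L_0, \rho)$ for every simple $\FF_p$-representation $\rho$ of every finite Galois quotient of $\Gal_{L_0}$, i.e.\ for every simple $\Gal_{L_0}$-representation. Replaying the (2) $\Rightarrow$ (1) $\Rightarrow$ (5) steps with $L_0$ in place of $K$ yields $\cd_p \Gal_S(L_{0,\cyc}) = 1$, establishing (4). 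The main obstacle in the whole plan is the structural identification $F = L_\cyc$ together with the companion computation $(K_m)_\cyc = K_\cyc$, which together make the inflation step in (1) $\Rightarrow$ (5) go through; once these are secured, everything else is careful bookkeeping with the earlier lemmas.
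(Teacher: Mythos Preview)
Your overall architecture is sound and in several places more explicit than the paper's; in particular, your argument for $(1)\Rightarrow(5)$ via the identification $F=L_\cyc$ and the observation $(K_m)_\cyc=K_\cyc$ is a clean alternative to the paper's $p$-Sylow direct-limit argument in its $(3)\Rightarrow(5)$ step. Your $(2)\Rightarrow(1)$ and $(3)\Rightarrow(4)$ steps are correct as written.

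There is, however, a genuine gap in your $(4)\Rightarrow(2)$. The Gysin sequence~\eqref{eqn:Gysin sequence} for $T\supset S_0$ is only available for modules $M$ that are already $\Gal_{S_0}((K_0)_\cyc)$-modules, i.e.\ unramified outside $S_0$; for such $M$ it gives the surjection $\HF^2(\Gal_{S_0}((K_0)_\cyc),M)\twoheadrightarrow\HF^2(\Gal_T((K_0)_\cyc),M)$ and hence the vanishing of the target. But statement~(2) requires $A(K_0,M)$ for \emph{every} simple $\Gal_{K_0}$-representation $M$, including those ramified at primes outside $S_0$, and for those the Gysin sequence over $S_0$ says nothing. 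In other words, the Gysin argument yields $\HF^2(\Gal_T((K_0)_\cyc),M)=0$ only for the restricted class of $M$, which is strictly weaker than $\cd_p\Gal_T((K_0)_\cyc)\leq 1$. Passing to $\FF_p$-coefficients does not help either: the vanishing of $\HF^2(\Gal_T((K_0)_\cyc),\FF_p)$ controls only the maximal pro-$p$ \emph{quotient}, not a $p$-Sylow \emph{subgroup}, and it is the latter that governs $\cd_p$.

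It is worth noting that the paper itself is equally cursory at exactly this point: it lists $(4)\Rightarrow(1)$ among the ``trivial'' implications without further comment, which is no more justified than your Gysin appeal. So the lacuna is shared. One way to close it cleanly is to bypass $(4)\Rightarrow(2)$ altogether and instead feed $(4)$ into the machinery you already built: from $(4)$ you certainly get $A(K_0,\FF_p)$, and since every finite extension $L/K_0$ \emph{inside $(K_0)_{S_0}$} has $\Gal_{(S_0)_L}(L_\cyc)$ open in $\Gal_{S_0}((K_0)_\cyc)$, you get $A(L,M)$ for all such $L$ and all $M$ unramified outside $S_0$; combined with the $p$-Sylow limit argument (as in the paper's $(3)\Rightarrow(5)$) and the descent via Lemma~\ref{lem:A under field extensions}(1), this eventually yields $A(K',\FF_p)$ for \emph{every} number field $K'$, from which $(5)$ follows. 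You may wish to rework the $(4)$-step along these lines, or at least flag that the persistence of $\cd_p$ under enlarging $S$ requires more than the Gysin surjection.
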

\begin{proof}
The implications
\[
\eqref{enum:A p-dim one for all}\Rightarrow\eqref{enum:A p-dim one exist}\Rightarrow\eqref{enum:A all fields all M}\Rightarrow\eqref{enum:A all simple M}
\]
are trivial.

Assume \eqref{enum:A all simple M}. Then Conjecture $A(K,M)$ holds for all $\Gal_K$-representations $M$ by Lemma~\ref{lem:A under quotiens and extensions}.(3). Let $L/K$ be a finite extension and $M$ a fixed $\Gal_K$-representation. Let $N$ be the $\Gal_K$-representation which is induced from the restriction of $M$ to $\Gal_L$, so that $A(K,N)$. Assertion \eqref{enum:A all fields} now follows from Lemma~\ref{lem:A under field extensions}.(2).

Assume \eqref{enum:A all fields} for a fixed number field $K'$ and a fixed $\Gal_{K'}$-representation $M$ and let $L$ be a finite extension of $K'$ such that the restriction of $M$ to $L$ is trivial. Then $A(L,M)$ implies $A(L,\FF_p)$ by Lemma~\ref{lem:A under quotiens and extensions}.(3). By Lemma~\ref{lem:A under field extensions}.(1), this implies $A(L',\FF_p)$ for all subfields $L'$ of $L$. In particular, we have $A(K',\FF_p)$ for every number field $K'$. Fix a number field $K$ and let $\mathcal{L}\subset K_S$ be the field fixed by a $p$-Sylow subgroup of $\Gal_S(K_\cyc)$. Then
\[
\HF^2(\Gal_S(\mathcal{L}),\FF_p)=\varinjlim_{L/K}\HF^2(\Gal_S(L_\cyc),\FF_p)=0,
\]
with $L/K$ running through the finite subextensions of $\mathcal{L}/K$. Since $\Gal_S(\mathcal{L})$ is a pro-$p$ group of  $p$-cohomological dimension at most $2$, we conclude
\[
\cd_p\Gal_S(\mathcal{L})\leq 1.
\]
Note that the $p$-cohomological dimensions of $\Gal_S(K_\cyc)$ and any of its $p$-Sylow subgroups are the same. On the other hand, we cannot have $\cd_p \Gal_S(K_\cyc)=0$, as we know that  $\cd_p \Gal_S(K)=2$ and that
\[
\cd_p \Gal_S(K)\leq \cd_p \Gal_S(K_\cyc) + \cd_p \Gamma=\cd_p \Gal_S(K_\cyc)+1
\]
 \cite[Prop. 10.11.3, Prop. 3.3.8]{NSW:CohomNumFields}. We have thus proved the remaining implication
\(
\eqref{enum:A all fields}\Rightarrow\eqref{enum:A p-dim one for all}.
\)
\end{proof}

Conjecture A has been generalised in various other directions, for example by allowing more general coefficient rings \cite{Lim:Notes} or by considering Hida deformations and `admissible' $p$-adic Lie extensions \cite{JhaSujatha:HidaDefFineSelmer}, \cite{Jha:FineSelmerHidaDefNonCom}. In the end, however, these generalisations turn out to be equivalent to Conjecture $A(K,M)$ for suitable $M$, as the following result shows.

\begin{prop}\label{prop:residual reps are enough}
Let $R$ be a possibly non-commutative adic $\Int_p$-algebra, i.\,e.\ compact for the topology defined by the powers of the Jacobson radical $\Jac_R\subset R$, and let $M$ be a finitely generated, compact left $R$-module with a continuous, $R$-linear action of $\Gal_K$ which is unramified outside a finite set $S$ of  primes of $K$ containing the primes above $p$. Assume that $\mathcal{L}/K$ is a Galois extension inside $K_S/K$ with Galois group $\mathcal{G}=\Gal(\mathcal{L}/K)$ such that
\begin{enumerate}
\item $K_\cyc\subset \mathcal{L}$,
\item $\mathcal{H}=\Gal(\mathcal{L}/K_\cyc)$ is a topologically finitely generated pro-$p$-group.
\end{enumerate}
Then $\HF^2(\Gal_S(K),M_{\mathcal{L}})$ is finitely generated over the profinite group ring $R[[\mathcal{H}]]$ if and only if $A(K,M/\Jac_R M)$ holds. If in addition, $R[[\mathcal{H}]]$ is Noetherian, then $Y(\pdual{M}(1)/\mathcal{L})$ is finitely generated over $R[[\mathcal{H}]]$ if and only if $A(K,M/\Jac_R M)$ holds.
\end{prop}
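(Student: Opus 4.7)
The plan is to apply the compact Nakayama lemma over the adic ring $R[[\mathcal{H}]]$. By \cite[Prop.~3.2]{Witte:MCVarFF}, since $\mathcal{H}$ is a topologically finitely generated pro-$p$-group and $R$ is adic, $R[[\mathcal{H}]]$ is itself adic; because $\mathcal{H}$ is pro-$p$, its Jacobson radical $J$ satisfies $R[[\mathcal{H}]]/J \cong R/\Jac_R$, a finite semisimple $\FF_p$-algebra. The group $\HF^2(\Gal_S(K), M_{\mathcal{L}})$ is compact via \eqref{eqn:Iwasawa cohomology}, so Nakayama reduces the first assertion to showing that $\HF^2(\Gal_S(K), M_{\mathcal{L}}) \otimes_{R[[\mathcal{H}]]} R/\Jac_R$ is finite if and only if $A(K, M/\Jac_R M)$ holds.

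The key step is a control-theoretic identification
\[
\HF^2(\Gal_S(K), M_{\mathcal{L}}) \otimes_{R[[\mathcal{H}]]} R/\Jac_R \cong \HF^2(\Gal_S(K), (M/\Jac_R M)_{K_\cyc}).
\]
At the coefficient level, writing $M_{\mathcal{L}} = R[[\mathcal{G}]]^{\sharp}\ctensor_R M$ and using the identification $R[[\mathcal{G}]] \otimes_{R[[\mathcal{H}]]} R/\Jac_R \cong (R/\Jac_R)[[\Gamma]]$ (obtained by quotienting out the augmentation ideal $I_\mathcal{H}$ of $R[[\mathcal{H}]]$ inside $R[[\mathcal{G}]]$ and then reducing modulo $\Jac_R$) yields $R/\Jac_R \ctensor_{R[[\mathcal{H}]]} M_{\mathcal{L}} \cong (M/\Jac_R M)_{K_\cyc}$ as $\Gal_S(K)$-modules. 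To lift this to cohomology, I would establish the derived-category quasi-isomorphism
\[
R/\Jac_R \Ltensor_{R[[\mathcal{H}]]} \cmplx{C}(\Gal_S(K), M_{\mathcal{L}}) \cong \cmplx{C}(\Gal_S(K), (M/\Jac_R M)_{K_\cyc})
\]
following the tensor-product manipulation in the proof of Lemma~\ref{lem:coefficient sequence} and exploiting the pro-freeness of $R[[\mathcal{G}]]$ as a right $R[[\mathcal{H}]]$-module. The resulting hyper-Tor spectral sequence, combined with $\cd_p \Gal_S(K) \le 2$ (which kills $\HF^q(\Gal_S(K), M_{\mathcal{L}})$ for $q \ge 3$), collapses in top degree so that only the edge contribution at $(p, q) = (0, 2)$ survives, yielding the displayed identification. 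Combined with Theorem~\ref{thm:equivalent forms of Conjecture A}\,(\ref{enum:A1})$\Leftrightarrow$(\ref{enum:A2}) applied to $M/\Jac_R M$, this completes the first assertion.

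For the second assertion, assuming additionally that $R[[\mathcal{H}]]$ is Noetherian, I would deduce the claim about $Y(\pdual{M}(1)/\mathcal{L})$ from a Poitou--Tate exact sequence analogous to \eqref{eqn:Poitou-Tate-II} but with $K_\cyc$ replaced throughout by $\mathcal{L}$: it relates $Y(\pdual{M}(1)/\mathcal{L})$ to $\HF^2(\Gal_S(K), M_{\mathcal{L}})$ modulo local Iwasawa cohomology terms $\HF^{i}(\Gal_{K_v}, M_{\mathcal{L}})$ for $v \in S$. The latter are readily finitely generated over $R[[\mathcal{H}]]$ since the local decomposition groups inside $\mathcal{G}$ are topologically finitely generated, and the Noetherian hypothesis then propagates finite generation through the subquotients in the sequence, giving the equivalence.

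The principal technical obstacle will be the rigorous justification of the derived control isomorphism in the possibly non-commutative, non-Noetherian adic setting: one must verify that $R[[\mathcal{G}]]$ is sufficiently flat over $R[[\mathcal{H}]]$ for the derived tensor product to agree with the ordinary tensor product, particularly when $M$ itself is not assumed flat over $R$.
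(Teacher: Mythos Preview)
Your overall strategy---reduce via the topological Nakayama lemma over $R[[\mathcal{H}]]$ to the finiteness of $\HF^2(\Gal_S(K),(M/\Jac_R M)_{K_\cyc})$, then invoke Theorem~\ref{thm:equivalent forms of Conjecture A}---matches the paper exactly. The difference lies in how the control identification
\[
R/\Jac_R \tensor_{R[[\mathcal{H}]]}\HF^2(\Gal_S(K),M_{\mathcal{L}})\;\cong\;\HF^2(\Gal_S(K),(M/\Jac_R M)_{K_\cyc})
\]
is obtained, and here the paper's argument is considerably more elementary than your derived-category route and sidesteps the flatness obstacle you flag at the end. Rather than proving a derived quasi-isomorphism, the paper observes that $R/\Jac_R[[\Gamma]]=R/\Jac_R\tensor_{R[[\mathcal{H}]]}R[[\mathcal{G}]]$ is \emph{finitely presented} as a right $R[[\mathcal{G}]]$-module (because $\Jac_{R[[\mathcal{H}]]}/\Jac_{R[[\mathcal{H}]]}^2$ is finite, so $\Jac_{R[[\mathcal{H}]]}$ and hence $\Jac_{R[[\mathcal{H}]]}R[[\mathcal{G}]]$ are finitely generated by Nakayama), and that $\HF^2(\Gal_S(K),-)$ is right exact and commutes with finite products since $\cd_p\Gal_S(K)\le 2$. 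Applying $\HF^2$ to a finite free presentation of $R/\Jac_R[[\Gamma]]$ over $R[[\mathcal{G}]]$, tensored with $M_{\mathcal{L}}$, gives the identification directly---no spectral sequence, and no hypothesis on $M$ beyond finite generation over $R$.

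For the second assertion, your description of $Y(\pdual{M}(1)/\mathcal{L})$ as sitting in an exact sequence with $\HF^2(\Gal_S(K),M_{\mathcal{L}})$ and the local terms $\HF^2(\Gal_{K_v},M_{\mathcal{L}})$ is correct, but the justification ``since the local decomposition groups inside $\mathcal{G}$ are topologically finitely generated'' does not by itself yield finite generation of the local $\HF^2$ over $R[[\mathcal{H}]]$. The paper instead reuses the same Nakayama device locally: it suffices that $\HF^2(\Gal_{K_v},(M/\Jac_R M)_{K_\cyc})$ be finite, and this follows from local Tate duality together with the fact that each prime of $K$ has only finitely many primes of $K_\cyc$ lying above it.
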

\begin{proof}
Under our assumptions on $\mathcal{H}$, both $\mathcal{G}$ and $\mathcal{H}$ are topologically finitely generated pro-$p$ groups. Hence, both $R[[\mathcal{H}]]$ and $R[[\mathcal{G}]]$ are adic $\Int_p$-algebras \cite[Prop 3.2]{Witte:MCVarFF}. The Jacobson radical of $R[[\mathcal{H}]]$ is given by
\[
\Jac_{R[[\mathcal{H}]]}=\ker\left(R[[\mathcal{H}]]\mto R/\Jac_R\right).
\]
Since
\[
\Jac_{R[[\mathcal{H}]]}/\Jac_{R[[\mathcal{H}]]}^2\subset R[[\mathcal{H}]]/\Jac_{R[[\mathcal{H}]]}^2
\]
is finite, the Jacobson radical is finitely generated as a left or right $R[[\mathcal{H}]]$-module by the topological Nakayama lemma. Hence,
\[
\Jac_{R[[\mathcal{H}]]}R[[\mathcal{G}]]=\ker\left(R[[\mathcal{G}]]\mto R/\Jac_R[[\Gamma]]\right)
\]
is also finitely generated as a left or right $R[[\mathcal{G}]]$-module. In particular,
\[
R/\Jac_R[[\Gamma]]=R/\Jac_R\tensor_{R[[\mathcal{H}]]}R[[\mathcal{G}]]
\]
is finitely presented as a right $R[[\mathcal{G}]]$-module. Since $\HF^2(\Gal_S(K),-)$ is a right exact functor that commutes with finite products, we conclude from the choice of a finite free presentation that
\[
\begin{aligned}
R/\Jac_R\tensor_{R[[\mathcal{H}]]}\HF^2(\Gal_S(K),M_{\mathcal{L}})&=
R/\Jac_R[[\Gamma]]\tensor_{R[[\mathcal{G}]]}\HF^2(\Gal_S(K),M_{\mathcal{L}})\\
&=\HF^2(\Gal_S(K),R/\Jac_R[[\Gamma]]\tensor_{R[[\mathcal{G}]]}\tensor_{R[[\mathcal{G}]]}M_{\mathcal{L}})\\
&=\HF^2(\Gal_S(K),(M/\Jac_R M)_{\mathcal{K_\cyc}})
\end{aligned}
\]
The topological Nakayama lemma then implies that the compact $R[[\mathcal{H}]]$-module $\HF^2(\Gal_S(K),M_{\mathcal{L}})$ is finitely generated precisely if $\HF^2(\Gal_S(K),(M/\Jac_R M)_{\mathcal{K_\cyc}})$ is finite. The latter is equivalent to Conjecture $A(K,M/\Jac_R M)$ by Theorem~\ref{thm:equivalent forms of Conjecture A}.

Now assume that $R[[\mathcal{H}]]$ is Noetherian and note that
\[
Y(\pdual{M}(1)/\mathcal{L})=\ker\left(\HF^2(\Gal_S(K),M_{\mathcal{L}})\mto\bigoplus_{v\in S}\HF^2(\Gal_{K_v},M_{\mathcal{L}})\right).
\]
In particular, if $\HF^2(\Gal_S(K),M_{\mathcal{L}})$ is finitely generated over $R[[\mathcal{H}]]$, then so is its submodule $Y(\pdual{M}(1)/\mathcal{L})$. Conversely, assume $Y(\pdual{M}(1)/\mathcal{L})$ is finitely generated over $R[[\mathcal{H}]]$. In order to imply that $\HF^2(\Gal_S(K),M_{\mathcal{L}})$ is finitely generated over $R[[\mathcal{H}]]$, it is then sufficient to prove that $\HF^2(\Gal_{K_v},M_{\mathcal{L}})$ is finitely generated over $R[[\mathcal{H}]]$ for each  prime $v$ of $K$.

By the same argument as before, this will follow if we show that the group $\HF^2(\Gal_{K_v},(M/\Jac_R M)_{K_\cyc})$ is finite. But this is true, since by local duality,
\[
\pdual{\HF^2(\Gal_{K_v},(M/\Jac_R M)_{K_\cyc})}=\bigoplus_{w\mid v}\HF^0(\Gal_{(K_{\cyc})_w},\pdual{M/\Jac_R M}(1))
\]
and since in $K_\cyc/K$, every  prime splits into finitely many primes.
\end{proof}

\begin{rem}\
\begin{enumerate}
\item The ring $R[[\mathcal{H}]]$ is Noetherian if $R$ is a commutative adic $\Int_p$-algebra and $\mathcal{H}$ is a compact $p$-adic Lie group \cite[Cor. 3.4]{Witte:Splitting}.
\item In general, $Y(\pdual{M}(1)/\mathcal{L})$ is expected to be rather small. If $\mathcal{G}$ is a compact $p$-adic Lie group of dimension greater than $1$, then it is conjectured that $Y(E[p^\infty]/\mathcal{L})$ is in fact a finitely generated torsion module over $\Int_p[[\mathcal{H}]]$ \cite[Conjecture B]{CoatesSujatha:FineSelmer}.
\item One may also conjecture that $\HF^2(\Gal_S(\mathcal{L}),M)=0$ for a $\Gal_K$-representation $M$ on a finite-dimensional vector space over $\FF_p$ and with $\mathcal{L}$ as in the above proposition. This is clearly implied by Conjecture $A(K,M)$, but it seems to be considerably weaker than $A(K,M)$ if $\mathcal{L}/K_\cyc$ is of infinite degree.
\end{enumerate}
\end{rem}

Conjecture $A(K,M)$ is closely related to Iwasawa's classical conjecture on the vanishing of the $\mu$-invariant for the cyclotomic extension of any number field $K$. The precise relationship is as follows:  We consider the Iwasawa modules
\begin{align*}
 X_{\nr}(K)&=\varprojlim_{K\subset L\subset K_\cyc} \Cl(L)\tensor_{\Int}\Int_p,\\
 X_{\cs}(K)&=\varprojlim_{K\subset L\subset K_\cyc} \Cl_p(L)\tensor_{\Int}\Int_p,\\
 X_S(K)&=\Gal(K_S(p)/K)^{\ab},
\end{align*}
where $\Cl_p(L)$ is the Picard group of the ring of integers $\IntR_L[\frac{1}{p}]$ of $L$ with $p$ inverted and $K_S(p)$ is the maximal pro-$p$-Galois extension of $K$ unramified outside the finite set $S$ of  primes containing the primes above $p$. It is known that $X_{\nr}(K)$, $X_{\cs}(K)$ and $X_S(K)$ are finitely generated $\Lambda$-modules. While $X_{\nr}(K)$ and $X_{\cs}(K)$ are $\Lambda$-torsion, the $\Lambda$-rank of $X_S(K)$ is given by the number $r_2$ of complex places of $K$ \cite[Prop. 11.1.4, Prop. 11.3.1, Cor. 11.3.15]{NSW:CohomNumFields}. For any finitely generated $\Lambda$-module $M$, we write $\mu(M)$ for the $\mu$-invariant of $M$. We then have
\[
 \mu(X_{\nr}(K))=\mu(X_{\cs}(K))
\]
and if the $p$-th roots of unity $\mu_p$ are contained in $K$, we also have
\[
 \mu(X_S(K))=\mu(X_{\nr}(K))
\]
\cite[Cor. 11.3.16, 11.3.17]{NSW:CohomNumFields}. Iwasawa's classical conjecture on the $\mu$-invariant amounts to
\[
\mu(X_{\nr}(K))=0
\]
for all number fields $K$.

\begin{prop}\label{prop:relation A and mu equal zero}
Let $K$ be a number field.
\begin{enumerate}
 \item\label{enum:Xnr} Conjecture $A(K,\mu_p)$ holds if and only if $\mu(X_{\nr}(K))=0$.
 \item\label{enum:XS} Conjecture $A(K,\FF_p)$ holds if and only if $\mu(X_S(K))=0$ for some finite $S$ containing the primes above $p$.
\end{enumerate}
In particular, Iwasawa's conjecture on the $\mu$-invariant holds for all number fields $K$ precisely if Conjecture $A(K,M)$ holds for all $K$ and $M$.
\end{prop}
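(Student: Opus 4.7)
The plan is to use Theorem~\ref{thm:equivalent forms of Conjecture A} to translate each statement $A(K,M)$ into the vanishing of $\HF^2(\Gal_S(K_\cyc),M)$ (or, equivalently, the finiteness of a fine Selmer group), and then to match this vanishing with $\mu(X_{\nr}(K))$ or $\mu(X_S(K))$ via a Pontryagin/Kummer duality computation together with a classical theorem of Iwasawa.

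For part~\eqref{enum:Xnr} I would take $M=\mu_p$, so that $\pdual{M}(1)=\FF_p$, and use Theorem~\ref{thm:equivalent forms of Conjecture A}.\eqref{enum:A6} to rewrite $A(K,\mu_p)$ as the finiteness of $R(\FF_p/K_\cyc)$. Global class field theory identifies the characters of $\Gal_{K_\cyc}^{\ab}$ that are locally trivial at every prime with the characters of the ideal class group $\Cl(K_\cyc)$, so
\[
R(\FF_p/K_\cyc)=\Hom(\Cl(K_\cyc)\otimes\FF_p,\FF_p).
\]
Pontryagin duality, combined with the fact that the norm maps on the finite class groups $\Cl(L)\{p\}$ (for finite subextensions $L/K$ of $K_\cyc/K$) dualize to the natural inclusions, identifies $\pdual{\Cl(K_\cyc)\{p\}}$ with $X_{\nr}(K)$; dualizing the tautological sequence $0\to\Cl(K_\cyc)\{p\}[p]\to\Cl(K_\cyc)\{p\}\xrightarrow{p}\Cl(K_\cyc)\{p\}\to\Cl(K_\cyc)/p\to 0$ then yields $R(\FF_p/K_\cyc)\isomorph X_{\nr}(K)[p]$. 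The structure theorem for finitely generated torsion $\Lambda$-modules shows $X_{\nr}(K)[p]$ to be pseudo-isomorphic to a direct sum of $\mu(X_{\nr}(K))$ copies of $\Omega$, so this module is finite precisely when $\mu(X_{\nr}(K))=0$.

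For part~\eqref{enum:XS} I would use Theorem~\ref{thm:equivalent forms of Conjecture A}.\eqref{enum:A1} to rewrite $A(K,\FF_p)$ as $\HF^2(\Gal_S(K_\cyc),\FF_p)=0$. Since $\Gamma$ is pro-$p$, we have $K_\cyc\subset K_S(p)$, hence the pro-$p$ completion of $\Gal_S(K_\cyc)$ equals $\mathcal{H}=\Gal(K_S(p)/K_\cyc)$, and a Sylow argument gives $\HF^2(\Gal_S(K_\cyc),\FF_p)=\HF^2(\mathcal{H},\FF_p)$. As $\mathcal{H}$ is pro-$p$, this group vanishes iff $\cd_p\mathcal{H}\leq 1$, i.e.\ iff $\mathcal{H}$ is a free pro-$p$ group; by the classical theorem of Iwasawa (see \cite[Lem.~8]{Schneider:HeightPairingII}) this is in turn equivalent to $\mu(X_S(K))=0$.

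Finally, for the ``in particular'' statement, the forward direction is immediate from \eqref{enum:Xnr} applied with $M=\mu_p$. Conversely, assume $\mu(X_{\nr}(L))=0$ for every number field $L$. For $L$ containing $\mu_p$ the relations \cite[Cor.~11.3.16, 11.3.17]{NSW:CohomNumFields} give $\mu(X_S(L))=0$, so $A(L,\FF_p)$ holds by \eqref{enum:XS}; Lemma~\ref{lem:A under field extensions}.\eqref{enum:field extension not Galois} applied to the prime-to-$p$ extension $L(\mu_p)/L$ then extends this to every $L$. Hence, for any number field $K$, the hypothesis of item~\eqref{enum:A all fields} of the preceding five-part proposition is satisfied (with $M=\FF_p$), so by its equivalence with item~\eqref{enum:A all fields all M} we conclude $A(K,M)$ for all $K$ and $M$. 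The main technical hurdle is the classical Iwasawa identification invoked in \eqref{enum:XS}, which I would quote rather than reprove.
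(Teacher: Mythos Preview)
Your arguments for part~\eqref{enum:XS} and for the final ``in particular'' clause are essentially the paper's, and they are fine. The problem lies in part~\eqref{enum:Xnr}.

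The identification $R(\FF_p/K_\cyc)=\Hom(\Cl(K_\cyc)\otimes\FF_p,\FF_p)$ is not correct, and the subsequent claim that ``the norm maps on the finite class groups $\Cl(L)\{p\}$ dualize to the natural inclusions'' so that $\pdual{\Cl(K_\cyc)\{p\}}\isomorph X_{\nr}(K)$ is false. There is no canonical self-duality on the finite groups $\Cl(L)\{p\}$ intertwining norm with extension of ideals, and the inverse limit $X_{\nr}(K)=\varprojlim_N\Cl(L)\{p\}$ is in general \emph{not} the Pontryagin dual of the direct limit $\Cl(K_\cyc)\{p\}=\varinjlim_j\Cl(L)\{p\}$; capitulation phenomena already provide counterexamples. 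Moreover, the fine Selmer condition for $\FF_p$ at primes $v\mid p$ of $K_\cyc$ is ``locally trivial'', which is strictly stronger than ``unramified'' because the residue fields at such $v$ are finite, so $\HF^1(\Gal_{k(v)},\FF_p)\neq 0$.

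The clean fix is to bypass $\Cl(K_\cyc)$ altogether. An element of $R(\FF_p/K_\cyc)$ is a continuous character $\Gal_S(K_\cyc)\to\FF_p$ which is unramified outside $p$ and locally trivial at all primes above $p$; by class field theory it therefore factors through the Galois group of the maximal abelian unramified pro-$p$ extension of $K_\cyc$ in which all primes above $p$ split completely, which is precisely $X_{\cs}(K)$. Thus $Y(\FF_p/K_\cyc)\isomorph X_{\cs}(K)/pX_{\cs}(K)$, and this is finite iff $X_{\cs}(K)$ is finitely generated over $\Int_p$ (Nakayama) iff $\mu(X_{\cs}(K))=0$ iff $\mu(X_{\nr}(K))=0$. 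This is close in spirit to your approach, whereas the paper instead identifies the full module $X_{\cs}(K)$ with $Y((\Rat_p/\Int_p)/K_\cyc)$ via the Poitou--Tate sequence and then invokes Proposition~\ref{prop:residual reps are enough} to pass to the residual representation $\mu_p$.
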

\begin{proof}
We prove \eqref{enum:Xnr}. Let $\Sigma$ be the set of primes of $K$ above $p$. Then class field theory in combination with local duality and the Poitou-Tate sequence implies
\[
\begin{aligned}
\Cl_p(K)\tensor_{\Int}\Int/(p^n)&=\coker\left(\bigoplus_{v\in \Sigma}\Gal_{K_v}^{\ab}\tensor_{\Int}\Int/(p^n)\mto \Gal_\Sigma(K)^{\ab}\tensor_{\Int}\Int/(p^n)\right)\\
&=\coker\left(\bigoplus_{v\in \Sigma}\HF^1(\Gal_{K_v},\mu_{p^n})\mto \pdual{\HF^1(\Gal_S(K),\Int/(p^n))}\right)\\
&=\ker\left(\HF^2(\Gal_\Sigma(K),\mu_{p^n})\mto \bigoplus_{v\in\Sigma} \HF^2(\Gal_{K_v},\mu_{p^n})\right).
\end{aligned}
\]
Passing to the inverse limit over $n\in N$ and all finite subextensions of $K_\cyc/K$ we conclude
\[
X_{\cs}(K)=Y((\Rat_p/\Int_p)/K_\cyc).
\]
Since $X_{\cs}(K)$ is $\Lambda$-torsion, we have $\mu(X_{\nr}(K))=\mu(X_{\cs}(K))=0$ precisely if $X_{\cs}(K)$ is finitely generated over $\Int_p$. The equivalence of $A(K,\mu_p)$ with $\mu(X_{\nr}(K))=0$ then follows from Proposition~\ref{prop:residual reps are enough} with $R=\Int_p$, $\mathcal{L}=K_\cyc$ and $M=\Int_p(1)$.

For \eqref{enum:XS}, we use that $\mu(X_S(K))=0$ is equivalent to the maximal pro-$p$-quotient $\Gal_S(K_\cyc)(p)$ of $\Gal_S(K_\cyc)$ being a free pro-$p$-group, which is in turn equivalent to
\[
\HF^2(\Gal_S(K_\cyc)(p),\FF_p)=0.
\]
Moreover,
\[
\HF^2(\Gal_S(K_\cyc)(p),\FF_p)=\HF^2(\Gal_S(K_\cyc),\FF_p)
\]
\cite[Thm. 11.3.7, Prop. 3.9.5, Cor. 10.4.8]{NSW:CohomNumFields}.
\end{proof}

\begin{rem}
Assume that $T$ is a representation of $\Gal_K$ on a finitely generated free $\Int_p$-module which is unramified outside a finite set of primes $S$. The weak Leopoldt conjecture for $T$ over $K_\cyc$ states that
\[
\HF^2(\Gal_S(K_\cyc),\pdual{T}(1))=0,
\]
see \cite{Greenberg:StructureOfSelmerGroups} and the references given therein. We note that this conjecture is implied by $A(K,\pdual{(T/pT)}(1))$. Indeed, assuming $A(K,\pdual{(T/pT)}(1))$, the long exact cohomology sequence for the short exact sequence
\[
0\mto\pdual{(T/pT)}(1)\mto\pdual{T}(1)\xrightarrow{\cdot p}\pdual{T}(1)\mto 0
\]
shows that the group $\HF^2(\Gal_S(K_\cyc),\pdual{T}(1))$ is uniquely $p$-divisible and $p$-torsion, and hence trivial.
\end{rem}

To view Conjecture $A(K,M)$ in this framework, see also \cite{Sujatha:EllipticCurvesAndMuEqualZero}.

\section{The Isogeny Invariance Conjecture}\label{sec:IsogenyInvariance}

In \cite{CoatesSujatha:FineSelmer}, the authors express their belief that Conjecture $A(K,E[p])$ is isogeny invariant in the following sense:

\begin{conj}[$I(K,E)$]
Let $E$ be a fixed elliptic curve over $K$. For every nontrivial isogeny $f\colon E\mto E'$ to an elliptic curve $E'$, with both $f$ and $E'$ defined over $K$, $A(K,E[p])$ holds if and only if $A(K,E'[p])$ holds.
\end{conj}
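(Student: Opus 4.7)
The plan is to reduce Conjecture $I(K,E)$ to a statement about one-dimensional $\FF_p$-representations of $\Gal_K$ and ultimately to Iwasawa's classical $\mu$-invariant conjecture. Since an isogeny of degree prime to $p$ induces an isomorphism of $\Gal_K$-modules $E[p] \isomorph E'[p]$, only isogenies of $p$-power degree are relevant. Factoring such an isogeny into a composition of isogenies of degree $p$ further reduces the problem to the case $\deg f = p$.

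In this case, write $C = \ker f \subset E[p]$ and $D = \ker \hat{f} \subset E'[p]$. The identities $\hat{f} \circ f = [p]_E$ and $f \circ \hat{f} = [p]_{E'}$ show that the images of $f$ and $\hat{f}$ on the $p$-torsion yield mirror short exact sequences of $\Gal_K$-modules
\[
0 \to C \to E[p] \xrightarrow{f} D \to 0, \qquad 0 \to D \to E'[p] \xrightarrow{\hat{f}} C \to 0,
\]
while the Weil pairing on $E[p]$ provides a canonical isomorphism $C \tensor_{\FF_p} D \isomorph \mu_p$, so that $D \isomorph \pdual{C}(1)$ as $\Gal_K$-modules.

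By the first part of Lemma~\ref{lem:A under quotiens and extensions}, the hypothesis $A(K, E[p])$ already delivers $A(K, D)$, and by the second part, $A(K, E'[p])$ then follows from $A(K, C)$. Consequently, $I(K, E)$ is equivalent to the implication
\[
A(K, E[p]) \implies A(K, C) \text{ for every } \Gal_K\text{-stable line } C \subset E[p].
\]
Via Theorem~\ref{thm:equivalent forms of Conjecture A}, this translates into the Iwasawa-theoretic question: given that the fine Selmer group $R(C/K_\cyc)$ is finite (this being $A(K, D)$), is $R(D/K_\cyc) = R(\pdual{C}(1)/K_\cyc)$, i.e.\ $A(K, C)$, also finite?

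To attack this last step, one would pick a finite extension $L/K$ over which $C$ becomes the trivial character; since $C$ takes values in $\FF_p^\times$, the degree $[L:K]$ divides $p-1$ and is in particular prime to $p$. Passing to $L(\mu_p)$, both $C$ and $D$ restrict to the trivial $\FF_p$-representation. By Lemma~\ref{lem:A under field extensions}(1), $A(L(\mu_p), \FF_p)$ implies $A(K, C)$, and by Proposition~\ref{prop:relation A and mu equal zero} together with the inclusion $\mu_p \subset L(\mu_p)$, the hypothesis $A(L(\mu_p), \FF_p)$ is equivalent to $\mu(X_{\nr}(L(\mu_p))) = 0$. This is where the principal obstacle lies: establishing $A(K, C)$ via this route is tantamount to Iwasawa's classical $\mu = 0$ conjecture for $L(\mu_p)$, which is currently known unconditionally only when $L(\mu_p)$ is abelian over $\Rat$ by Ferrero--Washington. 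This restriction is the mechanism behind the partial results for CM elliptic curves announced in the introduction, and it strongly suggests that the full conjecture $I(K, E)$ is of a difficulty comparable to the $\mu = 0$ conjecture itself.
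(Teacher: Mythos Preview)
The statement you were given is a \emph{conjecture}; the paper does not prove it in general and there is no proof to compare against. The paper's contribution is the structural analysis in Proposition~\ref{prop:classification} and Corollary~\ref{cor:Analysis of IKE}, plus unconditional verifications in certain CM cases. Your proposal is likewise not a proof --- you concede this in the last paragraph --- so the meaningful comparison is between your reduction and the paper's.

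Your central claimed equivalence, that $I(K,E)$ holds if and only if $A(K,E[p])\Rightarrow A(K,C)$ for every $\Gal_K$-stable line $C\subset E[p]$, is incorrect. The forward direction is fine, but the converse fails: when $A(K,E[p])$ is false your condition is vacuous, yet $I(K,E)$ still demands $A(K,E'[p])\Rightarrow A(K,E[p])$. In case~(d) of Proposition~\ref{prop:classification}, Corollary~\ref{cor:Analysis of IKE} shows that $I(K,E)$ is equivalent to the \emph{pair} of implications $A(K,E_0[p])\Rightarrow A(K,T)$ and $A(K,E_1[p])\Rightarrow A(K,\pdual{T}(1))$, involving two distinct curves in the isogeny class; your condition at a single curve $E$ with a unique line recovers only one of these. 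The underlying problem is that factoring into degree-$p$ isogenies does not reduce $I(K,E)$ to a statement about lines in $E[p]$ alone --- chaining forces one to control the lines in all the intermediate curves as well. Separately, your closing remark that the paper's CM results proceed via Ferrero--Washington is mistaken: Corollary~\ref{cor:CM over K} and Lemma~\ref{lem:non-trivial extension in CM case} never invoke $\mu=0$; they instead show that case~(d) cannot occur for the CM curves under consideration, whence $I(K,E)$ follows from Corollary~\ref{cor:Analysis of IKE} for purely structural reasons.
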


In this section, we will discuss this conjecture. Let us first recall the following fact on isogenies.

\begin{prop}\label{prop:classification}
Let $E/K$ and  $E'/K$ be elliptic curves which are isogenous over $K$. Then precisely one of the following holds true:
\begin{enumerate}
\item[(a)] $E$ has no $\Gal_K$-submodule of order $p$, $E[p]$ is a simple $\Gal_K$-module and there exists a $\Gal_K$-isomorphism $E'[p]\isomorph E[p]$.
\item[(b)]  $E$ has a $\Gal_K$-submodule $T\isomorph \pdual{T}(1)$ of order $p$ and both $E[p]$ and $E'[p]$ are extensions of $T$ by $T$.
\item[(c)] $E$ has complex multiplication over $K$ by an imaginary quadratic extension of $\Rat$ in which $p$ splits completely and $\Tate_p E\isomorph\chi\oplus \chi^{-1}(1)\isomorph \Tate_p E'$ for some character $\chi\colon \Gal_K\mto \Int_p^{\times}$ such that the residual representation of $\chi^{-2}(1)$ is nontrivial.
\item[(d)] $E$ has a $\Gal_K$-submodule $T\not\isomorph \pdual{T}(1)$ of order $p$ and there exists an elliptic curve $E_0/K$ isogenous to $E$ over $K$ with a unique finite maximal cyclic $\Gal_K$-submodule $C\subset E_0$ of $p$-power order. Set $E_1=E_0/C$. Then there exist non-split exact sequences
    \[
    \begin{array}{lcccccccr}
  0&\mto & T          &\mto& E_0[p]&\mto& \pdual{T}(1)&\mto& 0,\\
  0&\mto &\pdual{T}(1)&\mto& E_1[p]&\mto& T           &\mto& 0.
  \end{array}
  \]
  Moreover, $E'[p]$ is either isomorphic to $E_0[p]$, to $E_1[p]\not\isomorph E_0[p]$, or to $T\oplus \pdual{T}(1)$. The last possibility can occur if and only if the order of $C$ is greater than $p$.
\end{enumerate}
\end{prop}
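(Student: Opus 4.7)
My plan is to proceed by case analysis on the $\Gal_K$-module structure of $E[p]$, relying on two basic reductions. First, any $K$-isogeny $f \colon E \to E'$ factors as an isogeny of degree prime to $p$ (which induces a $\Gal_K$-isomorphism $E[p] \isomorph E'[p]$, its kernel being of order prime to $p$) followed by a composition of isogenies of degree $p$, so it suffices to analyse a single $p$-isogeny. Second, for a $p$-isogeny $f \colon E \to E'' = E/C$ with $C \subset E[p]$ of order $p$, the dual isogeny $\hat f$ yields a natural exact sequence of $\Gal_K$-modules
\[
 0 \mto E[p]/C \mto E''[p] \mto C \mto 0,
\]
so sub and quotient are interchanged on the $p$-torsion. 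Throughout I will use the Weil pairing identification $E[p]/T \isomorph \pdual{T}(1)$ for every $\Gal_K$-submodule $T \subset E[p]$ of order $p$.

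Case (a) is immediate: if $E[p]$ has no $\Gal_K$-submodule of order $p$, then $E[p]$ is simple, no $p$-isogeny begins at $E$, and every $K$-isogeny $E \to E'$ has kernel of order prime to $p$. Case (b) is nearly as quick: if a submodule $T$ exists with $T \isomorph \pdual{T}(1)$, then every order-$p$ submodule of $E[p]$ is isomorphic to $T$, as is every order-$p$ quotient by Weil duality, and the flip sequence applied inductively along a chain of $p$-isogenies preserves the property of the $p$-torsion being an extension of $T$ by $T$.

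For the remaining cases assume $T \not\isomorph \pdual{T}(1)$. The critical dichotomy is whether the rational Tate module $V_p E = \Tate_p E \tensor_{\Int_p} \Rat_p$ splits as a $\Gal_K$-module. By Faltings' isogeny theorem, $\End_K(E) \tensor \Rat_p = \End_{\Gal_K}(V_p E)$, and the right-hand side being two-dimensional over $\Rat_p$ is equivalent to such a splitting, the two characters then being necessarily distinct (otherwise we would be in case (b)). This forces $E$ to have CM over $K$ by an imaginary quadratic field $F \subset K$; the $F \tensor \Rat_p$-module structure on $V_p E$ then forces $p$ to split completely in $F$, and the Weil pairing makes the two characters of the decomposition $\chi$ and $\chi^{-1}\cycchar$. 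The assumption $T \not\isomorph \pdual{T}(1)$ translates exactly to $\chi^{-2}(1)$ being residually nontrivial. Any $E'$ isogenous to $E$ inherits the same CM and the same Tate module decomposition (up to swapping $\mathfrak{p}, \bar{\mathfrak{p}}$), yielding case (c).

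In case (d), $V_p E$ is $\Gal_K$-irreducible while $E[p]$ is reducible with non-isomorphic Jordan--H\"older factors $T$ and $\pdual{T}(1)$. Here I would invoke the classification of $\Gal_K$-stable $\Int_p$-lattices in $V_p E$: once the residual representation is reducible with distinct Jordan--H\"older factors, the stable lattices form a single chain in the Bruhat--Tits tree of $\mathrm{GL}_2(\Rat_p)$, and the chain is bounded because an unbounded descent would contradict irreducibility of $V_p E$. Translating lattices back to elliptic curves realises this chain as a finite sequence of $p$-isogenous curves, and its distinguished bottom vertex is the required $E_0$ with a unique maximal cyclic $\Gal_K$-stable $p$-power subgroup $C$; then $E_1 = E_0/C$ sits at the top. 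The non-split extension structure of $E_0[p]$ and $E_1[p]$ is forced at the endpoints of the chain by the irreducibility of $V_p E$, while an intermediate vertex yields the split $E'[p] \isomorph T \oplus \pdual{T}(1)$ by direct computation in the lattice model, which requires at least one intermediate vertex to exist, i.e., the order of $C$ to exceed $p$. The main obstacle is this last structural step in case (d), namely marshalling the Serre--Ribet lattice classification carefully enough to extract both the uniqueness of $E_0$ and the claimed (non-)splitness of the $p$-torsion at each vertex.
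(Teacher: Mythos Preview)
Your proposal is correct and reaches the same classification, but by a different route in two places. For (a) and (b) the paper does not factor isogenies: it shows directly, via freeness of $\Hom_{\Gal_K}(\Tate_pE,\Tate_pE')$ and a short $\Ext$ argument, that $\Hom_{\Gal_K}(E[p],E'[p])\neq 0$ for every $K$-isogenous $E'$, and then the Weil pairing gives $E[p]^{\sesi}\isomorph E'[p]^{\sesi}$ in one stroke, from which (a) and (b) drop out. The substantive divergence is in (d): the paper builds $E_0$ by hand, quotienting $E$ by a maximal cyclic $\Gal_K$-stable subgroup containing the ``wrong'' one-dimensional piece, verifies by a direct preimage computation that the result has a unique order-$p$ submodule, proves uniqueness of the maximal cyclic $C\subset E_0$ by an elementary semisimplification argument, and treats $E_1=E_0/C$ and the split case by further explicit manipulations. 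You replace all of this with the Ribet-style classification of $\Gal_K$-stable lattices in the irreducible $\Rat_p\tensor_{\Int_p}\Tate_pE$ as a finite segment in the Bruhat--Tits tree, reading off $E_0$ and $E_1$ as the endpoints and the split curves as interior vertices; this is more conceptual and makes the clause ``iff $|C|>p$'' transparent (it is exactly the existence of an interior vertex), at the cost of importing the lattice--curve dictionary as a black box, whereas the paper's argument is self-contained. One minor correction: your asserted equivalence ``$\End_{\Gal_K}$ two-dimensional $\Leftrightarrow$ the rational Tate module splits'' is false as written (CM with $p$ inert gives a two-dimensional endomorphism field with irreducible rational Tate module), but your argument only needs the implication ``splits $\Rightarrow$ endomorphism algebra nontrivial $\Rightarrow$ CM'', followed by ``splits and CM $\Rightarrow$ $p$ splits in $F$'', both of which are sound.
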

\begin{proof}
Since $E$ and $E'$ are isogenous over $K$, the group of continuous $\Gal_K$-homo\-morphisms $\Hom_{\Gal_K}(\Tate_p E,\Tate_p E')$ is nontrivial \cite[Thm. 7.4]{Silverman:EllipticCurves}. Moreover, the $\Int_p$-module $\Hom_{\Gal_K}(\Tate_p E,\Tate_p E')$ is a finitely generated and free. The long exact $\Ext_{\Gal_K}$-sequence for the short exact sequence
\[
0\mto \Tate_p E'\xrightarrow{\cdot p} \Tate_p E'\mto E'[p]\mto 0
\]
gives us a left exact sequence
\[
0\mto \Hom_{\Gal_K}(\Tate_p E,\Tate_p E')\xrightarrow{\cdot p}\Hom_{\Gal_K}(\Tate_p E,\Tate_p E')\mto\Hom_{\Gal_K}(\Tate_p E,E'[p]).
\]
The corresponding short exact sequence for $\Tate_p E$ gives us an isomorphism
\[
\Hom_{\Gal_K}(\Tate_p E,E'[p])\isomorph\Hom_{\Gal_K}(E[p],E'[p]).
\]
In particular, $\Hom_{\Gal_K}(E[p],E'[p])$ is nontrivial. Since $E[p]$ and $E'[p]$ are groups of the same order $p^2$, there exists either an $\Gal_K$-equivariant isomorphism $E[p]\isomorph E'[p]$ or a $\Gal_K$-equivariant homomorphism
\[
f\colon E[p]\mto E'[p]
\]
with kernel $\ker(f)$ of order $p$. In the latter case, the nondegenerate, alternating Weil pairings on $E[p]$ and $E'[p]$ imply the existence of a $\Gal_K$-equivariant homomorphism $\pdual{f}\colon E'[p]\mto E[p]$ with kernel isomorphic to $\pdual{\ker(f)}(1)$. Moreover, we have
\[
\begin{aligned}
\ker(\pdual{f})\isomorph\im(f)\isomorph\pdual{\ker(f)}(1),\\
\ker(f)\isomorph\im(\pdual{f})\isomorph\pdual{\ker(\pdual{f})}(1).
\end{aligned}
\]
In any case, we conclude that $E[p]^{\sesi}=E'[p]^{\sesi}$.

If $E$ has no finite $\Gal_K$-submodule of order $p$, $E[p]$ must be a simple $\Gal_K$ module. Hence, an isogeny $f$ as above cannot exist. In particular, $E[p]\isomorph E'[p]$.

Henceforth, we may assume that $E[p]$ has a finite submodule $T$ of order $p$. If $T\isomorph\pdual{T}(1)$, then $E[p]^\sesi=T^2$. In particular, both $E[p]$ and $E'[p]$ are extensions of $T$ by itself. So, we may assume that $T\not\isomorph\pdual{T}(1)$.

Assume that $\Tate_p E$ contains a $\Gal_K$-stable submodule $X$ of rank $1$ over $\Int_p$. Then $V=\Rat_p\tensor_{\Int}\Int_p$ splits into two one-dimensional representations and necessarily, the image of $\Gal_K$ in the automorphism group of $\Tate_p E$ is abelian. Then $E$ must have complex multiplication over $K$ by an order $\IntR$ in an imaginary quadratic number field $L$. Indeed, the $\Int_p$-module  $\End_{\Gal_K}(\Tate_p E)$ of $\Gal_K$-equivariant endomorphisms of $\Tate_p E$, i.\,e.\ those endomorphisms that commute with elements in the image of $\Gal_K$, is strictly greater than the centre of $\End_{\Int_p}(\Tate_p E)$, since it contains the centre and the image of $\Gal_K$. (If the image of $\Gal_K$ were contained in the centre, which is not possible, then $\End_{\Gal_K}(\Tate_p E)=\End_{\Int_p}(\Tate_p E)$.) By a celebrated theorem of Faltings \cite[Thm. 7.7]{Silverman:EllipticCurves}, this implies that the endomorphism ring of $E$ must be larger than $\Int$, as well. In particular, $V$ is a module of rank $1$ over $\Rat_p\tensor_{\Rat} L$ and $\Gal(\mathcal{L}/K)$ may be identified with an open subgroup of $(\Int_p\tensor_{\Int}\IntR)^\times$. But then, $X$ can only exist if $\Rat_p\tensor_{\Rat} L$ is not a field. So, $p$ must split in $L$. In this case, $\Tate_p E\isomorph \chi\oplus \chi^{-1}(1)$ for a $1$-dimensional representation $\chi$. The residual representation of $\chi^{-2}(1)$ cannot be trivial by our assumption that $T\not\isomorph\pdual{T}(1)$. Moreover, the same decomposition also holds for $\Tate_p E'$.

Henceforth, we may assume that $\Tate_p E$ contains no $\Gal_K$-stable submodule of rank $1$ over $\Int_p$. In particular, maximal cyclic $\Gal_K$-submodules of order a power of $p$ exist in $E$ and all $E'$ isogenous to $E$. If $E$ contains a further submodule $T'$ of order $p$ besides $T$, then necessarily $T'\isomorph \pdual{T}(1)$ and there exists a maximal cyclic submodule $C'\subset E$ of $p$-power order that contains $T'$. Consider the isogeny $\pi\colon E\mto E/C'=E_0$. The preimage of any submodule $T_0$ of $E_0$ of order $p$ under $\pi$ cannot be cyclic because of the maximality of $C'$. Hence,
\[
\pi^{-1}(T_0)=E[p]+C'
\]
and $T_0$ is the unique submodule of order $p$. Since $\pi$ restricted to $T$ is injective, we conclude that $T_0=\pi(T)$. Hence, $E_0[p]$ is a nontrivial extension of $\pdual{T}(1)$ by $T$. If $T$ is the only submodule of $E$ of order $p$, we may simply set $E_0=E$.

There exists a unique maximal cyclic $\Gal_K$-submodule $C$ of $E_0$ of order a power of $p$. Indeed, if $D\neq C$ is a second submodule with the same property, then $C+D$ cannot be cyclic. Hence, $C+D$ must contain $E_0[p]$ and $(C+D)^{\sesi}$ contains $\pdual{T}(1)$ as a submodule. But both $C$ and $D$ are cyclic and contain $T$ as a submodule. Hence, $C^{\sesi}$, $D^\sesi$ and $(C+D)^{\sesi}$ are powers of $T$. This contradicts our assumption that $T\neq \pdual{T}(1)$.

Consider $E_1=E_0/C$. Arguing as before, we see that $E_1$ contains a unique submodule $T_1$ of order $p$. Let $p^n$ be the order of $C$. The composition of the maps
\[
E_0[p]\subset E_0[p^n]\qto E_0[p^n]/C\subset E_1[p^n]
\]
shows that $T_1=E_0[p]/T_0\isomorph\pdual{T}(1)$. Hence, $E_1[p]$ is a nontrivial extension of $T$ by $\pdual{T}(1)$. Since $T$ is a submodule of $E_0[p]$, but not of $E_1[p]$, we have $E_0[p]\not\isomorph E_1[p]$.

Assume now that $E'[p]$ is not isomorphic to $E_0[p]$ or to $E_1[p]$. Because the group $\Hom_{\Gal_K}(E'[p],E_0[p])$ is nontrivial, we find a $\Gal_K$-equivariant homomorphism
\[
f\colon E'[p]\mto E_0[p]
\]
with kernel isomorphic to $\pdual{T}(1)$. Replacing $E_0$ by $E_1$, we see that $T$ is also a submodule of $E'[p]$. Hence, $E'[p]\isomorph T\oplus \pdual{T}(1)$.

If the order of $C$ is larger than $p$, then it is easy to check that $(E_0/T)[p]\isomorph T\oplus \pdual{T}(1)$. Conversely, assume that $E'$ has two distinct submodules
\[
T_1\isomorph T,\qquad T_2\isomorph \pdual{T}(1)
\]
of order $p$. Note that
\[
\Hom_{\Gal_K}(E_0[p],E'[p])=\Hom_{\Gal_K}(\pdual{T}(1),\pdual{T}(1))=\FF_p
\]
such that every element $E_0[p]\mto E'[p]$ may be lifted to an isogeny $E_0\mto E'$.
In particular, there exists an isogeny $\phi\colon E_0\mto E'$ whose restriction to $E_0[p]$ has kernel $T_0$. This implies that the $p$-primary component of $\ker \phi$ is cyclic and hence, it is contained in $C$. The $p$-primary component of $\phi^{-1}(T_1)$ is then also cyclic and contained in $C$. We conclude that $C$ must have an order of at least $p^2$.
\end{proof}

\begin{cor}\label{cor:Analysis of IKE}
Let $E/K$ be a fixed elliptic curve. If $E$ satisfies (a), (b), or (c) of Proposition \ref{prop:classification}, then Conjecture $I(K,E)$ holds.

If $E$ satisfies (d), then the following are equivalent:
\begin{enumerate}
\item\label{enum:IE1} Conjecture $I(K,E)$ holds.
\item\label{enum:IE2} $A(K,E_0[p])$ holds if and only if $A(K,E_1[p])$ holds.
\item\label{enum:IE3} Let $T\subset E_0[p]$ be the submodule of order $p$. Then $A(K,E_0[p])$ implies $A(K,T)$ and $A(K,E_1[p])$ implies $A(K,\pdual{T}(1))$.
\end{enumerate}
\end{cor}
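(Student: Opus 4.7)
The plan is to reduce everything to bookkeeping of implications between Conjecture~$A$ for the various $\Gal_K$-modules appearing in Proposition~\ref{prop:classification}, using only Lemma~\ref{lem:A under quotiens and extensions}. In case~(a) we have $E'[p]\isomorph E[p]$, and in case~(c) the given decompositions $\Tate_p E\isomorph \chi\oplus\chi^{-1}(1)\isomorph \Tate_p E'$ likewise force $E[p]\isomorph E'[p]$, so $I(K,E)$ is trivial in both cases. In case~(b), each of $E[p]$ and $E'[p]$ is an extension of $T$ by $T$. Part~(1) of Lemma~\ref{lem:A under quotiens and extensions} applied to the quotient gives $A(K,E[p])\Rightarrow A(K,T)$, and part~(2) (applied with both sub and quotient equal to $T$) gives $A(K,T)\Rightarrow A(K,E'[p])$; so all three statements are equivalent.

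For case~(d), Proposition~\ref{prop:classification} tells us that the modules $E'[p]$, as $E'$ ranges over elliptic curves isogenous to $E$, fall into the three isomorphism classes $E_0[p]$, $E_1[p]$, and $T\oplus\pdual{T}(1)$ (the last one only if $|C|>p$). Thus Conjecture $I(K,E)$ is precisely the mutual equivalence of $A(K,E_0[p])$, $A(K,E_1[p])$, and $A(K,T\oplus\pdual{T}(1))$. From the two non-split short exact sequences of Proposition~\ref{prop:classification}, part~(1) of Lemma~\ref{lem:A under quotiens and extensions} yields the automatic quotient implications $A(K,E_0[p])\Rightarrow A(K,\pdual{T}(1))$ and $A(K,E_1[p])\Rightarrow A(K,T)$; part~(2) applied to the obvious split sequence shows that $A(K,T\oplus\pdual{T}(1))$ is equivalent to the conjunction of $A(K,T)$ and $A(K,\pdual{T}(1))$; and part~(2) applied to the non-split sequences shows that this same conjunction implies both $A(K,E_0[p])$ and $A(K,E_1[p])$.

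With these implications in hand the equivalence of \eqref{enum:IE1}, \eqref{enum:IE2}, \eqref{enum:IE3} is a mechanical check. The implication \eqref{enum:IE1}$\Rightarrow$\eqref{enum:IE2} is trivial. For \eqref{enum:IE2}$\Rightarrow$\eqref{enum:IE3}: if $A(K,E_0[p])$ holds, then by \eqref{enum:IE2} so does $A(K,E_1[p])$, whence $A(K,T)$ by the automatic quotient implication; the case of $E_1[p]$ is symmetric. For \eqref{enum:IE3}$\Rightarrow$\eqref{enum:IE2}: assume $A(K,E_0[p])$; then by \eqref{enum:IE3} we obtain $A(K,T)$, and automatically $A(K,\pdual{T}(1))$, hence $A(K,E_1[p])$ from the extension step; the converse is symmetric. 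Finally, for \eqref{enum:IE2}$\Rightarrow$\eqref{enum:IE1}, combining \eqref{enum:IE2} with \eqref{enum:IE3} we see that $A(K,E_0[p])$ is equivalent to the conjunction ``$A(K,T)$ and $A(K,\pdual{T}(1))$'', which is equivalent to $A(K,T\oplus\pdual{T}(1))$; so all three modules enter equivalently into $I(K,E)$. The only subtle point---and the entire substance of the corollary---is that neither $A(K,E_0[p])\Rightarrow A(K,T)$ nor $A(K,E_1[p])\Rightarrow A(K,\pdual{T}(1))$ is automatic, because $T$ and $\pdual{T}(1)$ sit as sub-representations rather than as quotients; formulation \eqref{enum:IE3} precisely identifies these two non-automatic implications as the obstructions to isogeny invariance.
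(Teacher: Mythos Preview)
Your proof is correct and follows essentially the same approach as the paper's: both arguments reduce everything to Lemma~\ref{lem:A under quotiens and extensions} and the case analysis of Proposition~\ref{prop:classification}. The only cosmetic differences are that in case~(c) you observe $E[p]\isomorph E'[p]$ directly from the Tate module decomposition (the paper instead notes that $E'[p]$ is semisimple and reduces to $A(K,T)\wedge A(K,\pdual{T}(1))$), and in case~(d) you close the cycle via \eqref{enum:IE3}$\Rightarrow$\eqref{enum:IE2}$\Rightarrow$\eqref{enum:IE1} rather than \eqref{enum:IE3}$\Rightarrow$\eqref{enum:IE1} directly.
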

\begin{proof}
If $E$ satisfies (a), then $I(K,E)$ is trivial, since $E[p]\isomorph E'[p]$ for every $E'$ isogenous to $E$. If $E$ satisfies (b), then for every $E'$ isogenous to $E$, $A(K,E')$ holds if and only if $A(K,T)$ holds.  If $E$ satisfies (c), then $E'[p]$ is semisimple for all $E'$ and $A(K,E')$ holds if and only if $A(K,T)$ and $A(K,\pdual{T}(1))$ holds. In both cases, either $A(K,E')$ holds for all $E'$ or for none.

Assume $E$ satisfies (d) and that $C\subset E_0$ is the unique maximal cyclic $\Gal_K$-submodule $C$ of $p$-power order. Clearly, \eqref{enum:IE1} implies \eqref{enum:IE2}. Assume \eqref{enum:IE2} and that $A(K,E_0[p])$ or $A(K,E_1[p])$ holds. Then the respective other holds as well. Since $T$ is a quotient of $E_1[p]$ and $\pdual{T}(1)$ a quotient of $E_0[p]$, we conclude from Lemma~\ref{lem:A under quotiens and extensions} that $A(K,T)$ and $A(K,E_1[p])$ hold. Hence, \eqref{enum:IE3} is true. Finally, assume \eqref{enum:IE3} and that $A(K,E'[p])$ holds for some $E'$ isogenous to $E$. If $E'[p]$ is semisimple, then both $T$ and $\pdual{T}(1)$ are quotients of $E'[p]$. Otherwise, $E'[p]$ is either isomorphic to $E_0[p]$ or to $E_1[p]$. In any case, we may deduce that $A(K,T)$ and $A(K,\pdual{T}(1))$ hold, either from \eqref{enum:IE3} or from Lemma~\ref{lem:A under quotiens and extensions}. Lemma~\ref{lem:A under quotiens and extensions} then also implies that $A(K,E')$ holds for every $E'$ isogenous to $E$. Hence, Conjecture $I(K,E)$ is true.
\end{proof}

\begin{rem}
In the case (d) of the above proposition, note that both $E_0[p]$ and $E_1[p]$ are quotients of the $\Gal_K$-representation $E[p^{n+1}]$ with $p^n$ denoting the order of the maximal cyclic $\Gal_K$-submodule $C$ of $p$-power order of $E_0$. Let $K(E[p^{n+1}])$ denote the minimal extension trivialising the Galois representation $E[p^{n+1}]$. Then both $E_0[p]$ and $E_1[p]$ may be viewed as specific extensions of one dimensional representations of $\Gal(K(E[p^{n+1}])/K)$ over $\FF_p$. In particular, the isogeny conjecture in this case boils down to understanding the relationship between these two extensions.
\end{rem}

Let us now discuss Conjecture $I(K,E)$ for CM elliptic curves. Let $\mathcal{O}$ be an order in an imaginary quadratic field $L$. Then $\mathcal{O}$ is of the form
\[
\mathcal{O}=\Int+f\mathcal{O}_L,
\]
with $f\in\N$ the conductor of $\mathcal{O}$ and $\mathcal{O}_L$ the maximal order of $L$. Recall that the discriminant of $\mathcal{O}$ is given by
\[
d_{\mathcal{O}}=f^2d_L,
\]
with $d_L$ denoting the discriminant of $L$. Recall that an elliptic curve $E/K$ is a CM elliptic curve if the endomorphism ring of $E$ over the algebraic closure $\algc{K}$ of $K$ is strictly larger than $\Int$. Let us first assume that $E/K$ is an elliptic curve which has complex multiplication by $\mathcal{O}$ over $K$ so that all endomorphisms over $\algc{K}$ are already defined over $K$. In this case, $E[p]$ is a free $\mathcal{O}/p\mathcal{O}$-module of rank 1 \cite[Lem.~1]{Parish:RationalTorsion} and the action of $\Gal_K$ on $E[p]$ is given by a character
\[
\psi\colon\Gal_K\mto(\mathcal{O}/p\mathcal{O})^\times.
\]

\begin{cor}\label{cor:CM over K}
Assume that $E/K$ has complex multiplication by $\mathcal{O}$ over $K$. Then Conjecture $I(K,E)$ holds.
\end{cor}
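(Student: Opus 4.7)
The plan is to apply Corollary~\ref{cor:Analysis of IKE} by verifying that a CM elliptic curve $E/K$ with complex multiplication defined over $K$ never falls into case~(d) of Proposition~\ref{prop:classification}, and hence lies in one of cases~(a), (b), or~(c).

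The starting observation is that $E[p]$ is a free module of rank one over the two-dimensional $\FF_p$-algebra $R:=\mathcal{O}/p\mathcal{O}$, with the $\Gal_K$-action given by a character $\psi\colon\Gal_K\to R^\times$ commuting with the $R$-structure. By the classification of two-dimensional commutative $\FF_p$-algebras, $R$ is one of $\FF_{p^2}$, $\FF_p\times\FF_p$, or the dual numbers $\FF_p[\epsilon]/(\epsilon^2)$; moreover a short computation with the minimal polynomial of $f\tau$, for $\tau$ a generator of $\mathcal{O}_L/\Int$, shows that the dual-numbers case is forced whenever $p$ divides the conductor $f$, and that the decomposition $R=\FF_p\times\FF_p$ can occur only when $p\nmid f$ and $p$ splits in $L$.

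I would then analyse, in each case, the $\Gal_K$-submodules of $E[p]$ of order $p$. Any such submodule $T$ is either an $R$-submodule or else satisfies $RT=E[p]$; the latter situation forces the image of $\psi$ into $\FF_p^\times\subset R^\times$, so $\Gal_K$ acts on $E[p]$ by a scalar character $\chi$, the Weil pairing then gives $\chi\isomorph\pdual{\chi}(1)$, and $E$ lies in case~(b). Otherwise $T=\mathfrak{a}E[p]$ for an ideal $\mathfrak{a}\subset R$ of index $p$, and three sub-cases arise: no such ideal exists when $R=\FF_{p^2}$, so $E[p]$ is simple (case~(a)); a unique such ideal $(\epsilon)$ exists when $R$ is the dual numbers, and a direct matrix computation with $\psi$ combined with the Weil pairing yields $T\isomorph\pdual{T}(1)$ (case~(b)); and when $R=\FF_p\times\FF_p$, the identification $\Tate_p E=\mathcal{O}_L\tensor_{\Int}\Int_p\isomorph\Int_p\times\Int_p$ produces a decomposition $\Tate_p E\isomorph\chi\oplus\chi^{-1}(1)$ realising case~(c) (or degenerating to case~(b) if the residual representation of $\chi^{-2}(1)$ is trivial).

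The main obstacle is the exclusion of case~(d), which a priori could occur whenever $E[p]$ carries a non-self-dual submodule while $\Tate_p E$ admits no rank-one $\Gal_K$-stable $\Int_p$-subspace. The case analysis above rules this out precisely because, in the CM-over-$K$ setting, whenever a $\Gal_K$-stable line in $E[p]$ is not self-dual by the Weil-pairing determinant identity, the Tate module itself already splits as a direct sum of two rank-one $\Int_p$-representations, so $E$ is captured by case~(c) rather than case~(d).
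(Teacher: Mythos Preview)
Your argument is correct but takes a genuinely different route from the paper's. The paper argues by contradiction: assuming case~(d), it observes that $E[p]$ represents a nontrivial class in $\HF^1(g,A)$ with $A=T\tensor_{\FF_p}T(-1)$ and $g$ the image of $\Gal_K$ in $\Aut(E[p])$. Since $E$ has CM over $K$, the group $g$ is abelian; letting $h\subset g$ be the maximal prime-to-$p$ subgroup, the hypothesis $T\not\isomorph\pdual{T}(1)$ forces $A^h=0$, and then Hochschild--Serre gives $\HF^1(g,A)=\HF^0(g/h,\HF^1(h,A))=0$, a contradiction. This is a three-line cohomological argument that avoids any case analysis of $\mathcal{O}/p\mathcal{O}$.

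Your approach instead classifies $R=\mathcal{O}/p\mathcal{O}$ explicitly and reads off the submodule structure of $E[p]$ in each case, directly placing $E$ into (a), (b), or (c). This is longer but more informative: it tells you \emph{which} case occurs in terms of the arithmetic of $p$ in $\mathcal{O}$, not merely that (d) is excluded. The key observation that any $\Gal_K$-stable line not stable under $R$ forces the image of $\psi$ into the scalars $\FF_p^\times$ is the crux, and it is correct in all three ring types. One small advantage of the paper's method is that it immediately generalises to the next result (Lemma~\ref{lem:non-trivial extension in CM case}), where $g$ is no longer abelian but an extension of $\Int/2\Int$ by an abelian group, and the same $\HF^1$ computation with a mild twist handles it; your structural approach would need to be redone in that setting.
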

\begin{proof}
Write $g$ for the image of $\Gal_K$ in the automorphism group of $E[p]$. By Corollary~\ref{cor:Analysis of IKE} we may assume that $E[p]$ has a non-trivial $g$-subrepresentation $T$ of rank $1$ over $\FF_p$ such that $E[p]$ is a non-trivial extension of $\pdual{T}(1)$ by $T$ and such that $T\not\isomorph\pdual{T}(1)$. Consider the $g$-representation
\[
A=T\tensor_{\FF_p}T(-1)
\]
of rank $1$ over $\FF_p$. Then $E[p]$ represents a nontrivial class in
\[
\Ext^1_{g}(\pdual{T}(1),T)=\HF^1(g,A).
\]
If $E$ has complex multiplication over $K$, then $g$ is abelian. Let $h\subset g$ be the maximal subgroup of order prime to $p$. By our assumption that $T\not\isomorph\pdual{T}(1)$, $h$ acts non-trivially on $A$, so that $A^h=0$. The Hochschild-Serre spectral sequence then implies
\[
\HF^1(g,A)=\HF^0(g/h,\HF^1(h,A))=0
\]
in contradiction to our assumption that the class of $E[p]$ is nontrivial.
\end{proof}

Let us now consider the case when $E/K$ does not have complex multiplication over $K$, but does so over the larger field $KL$.

\begin{lem}\label{lem:non-trivial extension in CM case}
Let $E/K$ be an elliptic curve such that
\begin{enumerate}
\item[(i)] $E$ has complex multiplication by $\mathcal{O}$ over $KL\neq K$,
\item[(ii)] there exists a $\Gal_K$-submodule $T\subset E[p]$ of rank $1$ over $\FF_p$,
\item[(iii)] $T\not\isomorph \pdual{T}(1)$ as $\Gal_K$-modules,
\item[(iv)] $E[p]$ is a non-trivial extension of $\pdual{T}(1)$ by $T$.
\end{enumerate}
Then $p$ divides the discriminant $d_{\mathcal{O}}$ and $T\isomorph \pdual{T}(1)$ as $\Gal_{LK}$-modules. If $p\equiv 3 \mod 4$ and $\sqrt{-p}\in K$, then such a curve cannot exist.
\end{lem}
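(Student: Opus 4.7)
The plan is to prove the three assertions in order: (A) $p\mid d_{\mathcal{O}}$, (B) $T\isomorph \pdual{T}(1)$ as $\Gal_{LK}$-modules, and (C) non-existence under the final hypotheses. Throughout I will use that (i) gives a character $\psi\colon \Gal_{LK}\to (\mathcal{O}/p\mathcal{O})^{\times}$ recording the action on $E[p]$, and that any $\tau\in \Gal_K\setminus \Gal_{LK}$ satisfies $\tau\comp\alpha=\bar\alpha\comp\tau$ on $E[p]$ for every $\alpha\in\mathcal{O}$, where the bar denotes complex conjugation on $\mathcal{O}$.

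For (A), I will suppose $p\nmid d_{\mathcal{O}}$, so $\mathcal{O}/p\mathcal{O}$ is \'etale over $\FF_p$, and derive a contradiction with the hypotheses in each of the two resulting cases. If $p$ splits in $\mathcal{O}$, so $\mathcal{O}/p\mathcal{O}\isomorph\FF_p\times\FF_p$, then $E[p]=E[\mathfrak{p}]\oplus E[\bar\mathfrak{p}]$ with $\tau$ interchanging the summands. I will split on whether the two $\Gal_{LK}$-characters on the summands coincide: if not, the only $\Gal_{LK}$-stable lines are $E[\mathfrak{p}]$ and $E[\bar\mathfrak{p}]$, swapped by $\tau$, contradicting (ii); if yes, $\tau$ is antidiagonal in the adapted basis, and the square/non-square dichotomy for the product of its antidiagonal entries yields either a $\Gal_K$-split module (contradicting (iv)) or a $\Gal_K$-irreducible one (contradicting (ii)). The inert case $\mathcal{O}/p\mathcal{O}\isomorph\FF_{p^2}$ is parallel: any $\Gal_{LK}$-stable $\FF_p$-line forces $\psi$ into $\FF_p^{\times}$, so $\Gal_{LK}$ acts by scalars, $\tau^2=\psi(\tau^2)\id$, and the same dichotomy applied to $\psi(\tau^2)$ closes the argument. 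I expect the main technical obstacle of the proof to lie here, in bookkeeping the $\FF_p$-linear but $\mathcal{O}$-semi-linear nature of $\tau$ through these sub-cases.

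For (B), I will use that once $p\mid d_{\mathcal{O}}$, in both cases $p\mid d_L$ and $p\mid f$ the ring $\mathcal{O}/p\mathcal{O}$ is of the form $\FF_p[t]/t^2$ (with $t$ a uniformizer of the prime above $p$ in $\mathcal{O}_L$, respectively the image of $f\omega$ for $\mathcal{O}_L=\Int[\omega]$). The submodule $T_0=t\cdot E[p]$ is the unique non-zero proper $\mathcal{O}/p\mathcal{O}$-submodule; it is $\Gal_{LK}$-stable by construction and also $\tau$-stable because $\bar t=-t$. Applying the eigenvalue argument of (A) once more, hypothesis (iv) rules out the possibility that $\psi$ factors through $\FF_p^{\times}$, so $T_0$ is the unique $\Gal_{LK}$-stable $\FF_p$-line and must coincide with $T$. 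A direct computation then shows that $\Gal_{LK}$ acts on both $T_0$ and $E[p]/T_0$ via the reduced character $\bar\psi\colon \Gal_{LK}\to \FF_p^{\times}$, and combining this with the Weil-pairing identification $E[p]/T\isomorph \pdual{T}(1)$ gives $T\isomorph \pdual{T}(1)$ over $\Gal_{LK}$. Writing $\chi_T,\chi_{E/T}\colon\Gal_K\to\FF_p^{\times}$ for the $\Gal_K$-characters on $T$ and $E[p]/T$, both restrict to $\bar\psi$ and hence differ by the quadratic character $\epsilon$ of $\Gal_K$ cutting out $LK/K$; (iii) forces this difference to be non-trivial, i.\,e.\ $\chi_T=\chi_{E/T}\cdot\epsilon$. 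Together with $\chi_T\cdot\chi_{E/T}=\cycchar$ from the Weil pairing, this yields the key identity $\chi_T^{2}=\cycchar\cdot\epsilon$.

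Stage (C) is then a short character-theoretic exploitation of this identity. Since $\Rat(\sqrt{-p})$ is the unique quadratic subfield of $\Rat(\zeta_p)$ when $p\equiv 3\pmod 4$, the hypothesis $\sqrt{-p}\in K$ forces $\cycchar|_{\Gal_K}$ to take values in $(\FF_p^{\times})^{2}$; because $(p-1)/2$ is odd in this case, the extension $1\to\{\pm 1\}\to\FF_p^{\times}\to(\FF_p^{\times})^{2}\to 1$ splits, so $\cycchar|_{\Gal_K}$ is an actual square in $\Hom(\Gal_K,\FF_p^{\times})$. On the other hand, the non-trivial quadratic character $\epsilon$ is a square in $\Hom(\Gal_K,\FF_p^{\times})$ only if this group contains an element of order $4$, i.\,e.\ only if $4\mid p-1$, which fails. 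Hence $\cycchar\cdot\epsilon$ is not a square, contradicting $\chi_T^{2}=\cycchar\cdot\epsilon$ and ruling out the existence of such a curve.
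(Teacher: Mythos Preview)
Your proof is correct, but it takes a genuinely different route from the paper's argument.

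The paper works cohomologically throughout: it sets $A=T\tensor_{\FF_p}T(-1)$ and uses that $E[p]$ represents a nonzero class in $\HF^1(g',A)$, where $g'$ is the image of $\Gal_K$ in $\Aut(E[p])$. Since $[g':g]=2$ is prime to $p$ (with $g$ the image of $\Gal_{LK}$), restriction to $\HF^1(g,A)$ is injective, and the argument of the preceding corollary forces $g$ to act trivially on $A$; this gives (B) at once and simultaneously shows that $g$ has an element of order $p$, which is then parlayed into (A) by a short case split on the behaviour of $p$ in $\mathcal{O}$. For (C) the paper picks $\sigma\in g'\smallsetminus g$, observes that $\sigma'=\sigma^{(p-1)/2}$ still lies outside $g$ (as $(p-1)/2$ is odd), yet acts trivially on $T\tensor T$ (Fermat) and on $\mu_p$ (since $\sqrt{-p}\in K$), hence trivially on $A$; this contradicts the fact that the nontrivial coset of $g$ acts nontrivially on $A$.

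Your approach is instead structural and explicit: you analyse $\mathcal{O}/p\mathcal{O}$ case by case (split, inert, $\FF_p[t]/t^2$), track the semilinear $\tau$-action by matrices, and extract the character identity $\chi_T^2=\cycchar\cdot\epsilon$, which you then kill by a clean parity argument in $\Hom(\Gal_K,\FF_p^\times)$. This is more elementary and entirely self-contained (it does not invoke the preceding corollary), at the cost of more bookkeeping in stage (A). The paper's argument is shorter and more conceptual, but relies on the cohomological machinery already set up. Your character identity in (B) and the squareness obstruction in (C) are a nice alternative to the paper's single-element trick; both encode the same underlying fact that the action of $g'/g$ on $A$ is nontrivial while $\cycchar$ restricted to $\Gal_K$ is a square when $p\equiv 3\bmod 4$ and $\sqrt{-p}\in K$.
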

\begin{proof}
We write $g'$ for the image of $\Gal_K$ in the automorphism group of $E[p]$ and set $A=T\tensor_{\FF_p}T(-1)$ as before. Then $g'$ is an extension of the group
\[
z=\Gal(KL/K)
\]
of order $2$ by an abelian group $g$. Since we assume $p\neq 2$, the restriction map
\[
\HF^1(g',A)\mto \HF^1(g,A)
\]
is injective. Since $E[p]$ represents a non-trivial class in $\HF^1(g',A)$, we conclude that $\HF^1(g,A)\neq 0$. By the same argument as in Corollary~\ref{cor:CM over K}, this can only be true if the action of $g$ on $A$ is trivial. In particular,  $T$ and $\pdual{T}(1)$  are isomorphic as $\Gal_{LK}$-modules. Moreover, $g$ must contain a non-trivial subgroup of order $p$. This can only happen if $p$ divides the discriminant $d_{\mathcal{O}}$. Indeed, if this were not the case, then the primes of $\mathcal{O}$ above $p$ are regular in the sense of \cite[\S~12]{Neukirch:ANT} and unramified in $L/\Rat$. If $p$ is inert in $L$, then $E[p]$ is a vector space of dimension $1$ over the residue field $k$ of $p$ in $L$ with an faithful action of $g'$ by $k$-linear automorphisms. In particular, the order of $g'$ is prime to $p$. If $p$ splits in $L$, then the restriction of $E[p]$ to $\Gal_{LK}$ is split. In both cases, $E[p]$ represents the trivial class in $\HF^1(g',A)$, in contradiction to our assumptions.

Now, assume that $p\equiv 3\mod 4$ and that $\sqrt{-p}\in K$. Let $\sigma\in g$ be an element which is not contained in $g'$. Since $\frac{p-1}{2}$ is an odd integer, the element $\sigma'=\sigma^{\frac{p-1}{2}}$ is also not contained in $g'$. However, since $\sqrt{-p}\in K$, the element $\sigma'$ fixes $K(\mu_p)$. Moreover, $\sigma'$ acts trivially on $T\tensor_{\FF_p} T$. Hence, $\sigma'$ acts trivially on $A$, so that $A$ is in fact a trivial $g'$-module, in contradiction to assumption (iii).
\end{proof}

\begin{cor}
Assume that $p\equiv 3 \mod 4$ and that $\sqrt{-p}\in K$. Then $I(K,E)$ holds for all CM elliptic curves $E/K$.
\end{cor}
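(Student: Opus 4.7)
The plan is to combine Corollary~\ref{cor:CM over K}, Corollary~\ref{cor:Analysis of IKE}, and Lemma~\ref{lem:non-trivial extension in CM case} to prove $I(K,E)$ for every CM elliptic curve $E/K$. Let $E/K$ be CM, with geometric endomorphism ring an order in an imaginary quadratic field $L$. If $L\subset K$, then Corollary~\ref{cor:CM over K} immediately gives $I(K,E)$. Otherwise $E$ acquires CM only over $KL\neq K$, and by Corollary~\ref{cor:Analysis of IKE} the task reduces to showing that the only cases of Proposition~\ref{prop:classification} that can apply to $E$ are (a), (b), or (c); equivalently, one must rule out case (d).

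Suppose toward a contradiction that $E$ lies in case (d) of Proposition~\ref{prop:classification}. The statement of case (d) produces an elliptic curve $E_0/K$ isogenous to $E$ over $K$ together with a $\Gal_K$-submodule $T\subset E_0[p]$ of order $p$ such that $T\not\isomorph\pdual{T}(1)$ and a non-split exact sequence
\[
0\mto T\mto E_0[p]\mto \pdual{T}(1)\mto 0.
\]
Since $E_0$ is $K$-isogenous to $E$, it is itself a CM elliptic curve with geometric endomorphism ring an order $\mathcal{O}'$ in the same imaginary quadratic field $L$, so $E_0$ acquires CM exactly over $KL\neq K$. Thus $E_0$ satisfies all four hypotheses (i)--(iv) of Lemma~\ref{lem:non-trivial extension in CM case}.

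Under the present assumptions $p\equiv 3\bmod 4$ and $\sqrt{-p}\in K$, Lemma~\ref{lem:non-trivial extension in CM case} asserts that no such $E_0$ can exist. This contradiction rules out case (d), so $E$ falls into case (a), (b), or (c), and Corollary~\ref{cor:Analysis of IKE} yields $I(K,E)$. The only nontrivial input is Lemma~\ref{lem:non-trivial extension in CM case} itself—the main obstacle, namely the delicate Galois-cohomological computation using that $\sigma^{(p-1)/2}$ fixes $K(\mu_p)$ and acts trivially on $T\tensor_{\FF_p}T$, has already been dispatched there. What remains is purely bookkeeping: identifying $E_0$ as an instance of the configuration handled by the lemma.
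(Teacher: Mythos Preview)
Your proof is correct and follows essentially the same route as the paper's: reduce to the non-CM-over-$K$ case via Corollary~\ref{cor:CM over K}, then use Lemma~\ref{lem:non-trivial extension in CM case} to rule out case (d) of Proposition~\ref{prop:classification}, and conclude by Corollary~\ref{cor:Analysis of IKE}. Your version is slightly more explicit than the paper's in that you pass to the curve $E_0$ from case (d) before invoking the lemma, which is indeed the curve satisfying hypotheses (i)--(iv); the paper leaves this identification implicit.
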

\begin{proof}
Let $E/K$ be any CM elliptic curve. By Corollary~\ref{cor:CM over K} we may assume that $E$ does not have CM over $K$. By Lemma~\ref{lem:non-trivial extension in CM case}, case (d) of Proposition~\ref{prop:classification} can never occur. Hence, $I(K,E)$ holds by Corollary~\ref{cor:Analysis of IKE}.
\end{proof}

\section{Closing Remarks}\label{sec:ClosingRemarks}

In the light of \eqref{enum:IE3} of Corollary~\ref{cor:Analysis of IKE}, we may formulate the following conjecture for arbitrary $\Gal_K$-representations $M$ on finite-dimensional vector spaces over $\FF_p$.

\begin{conj}[$\Sub(K,M)$]
For every subrepresentation $N\subset M$, $A(K,M)$ implies $A(K,N)$.
\end{conj}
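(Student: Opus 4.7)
The plan is to attack the conjecture through the long exact cohomology sequence over $\Gal_S(K_\cyc)$ associated to $0 \to N \to M \to M/N \to 0$. By Lemma~\ref{lem:A under quotiens and extensions}\,(1), the hypothesis $A(K,M)$ already forces $A(K,M/N)$, so both outer $\HF^2$'s vanish and the sequence truncates to
\begin{equation*}
\HF^1(\Gal_S(K_\cyc),M) \xrightarrow{\phi} \HF^1(\Gal_S(K_\cyc),M/N) \to \HF^2(\Gal_S(K_\cyc),N) \to 0.
\end{equation*}
Thus $\Sub(K,M)$ reduces to showing that $\phi$ is surjective, i.e.\ that every cohomology class in $\HF^1(\Gal_S(K_\cyc),M/N)$ lifts to one in $\HF^1(\Gal_S(K_\cyc),M)$.

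The natural next step is to translate this lifting problem into a finiteness statement about fine Selmer groups via item~\eqref{enum:A6} of Theorem~\ref{thm:equivalent forms of Conjecture A}, which identifies $A(K,N)$ with finiteness of $R(\pdual{N}(1)/K_\cyc)$, and to compare it with the finiteness of $R(\pdual{M}(1)/K_\cyc)$ granted by $A(K,M)$, running the dual sequence $0 \to \pdual{(M/N)}(1) \to \pdual{M}(1) \to \pdual{N}(1) \to 0$ through the Poitou--Tate sequence~\eqref{eqn:Poitou-Tate-II}. In special situations where the extension class of $M$ in $\Ext^1_{\Gal_K}(M/N,N)$ is pinned down by an abelian character, one can then try to imitate Lemma~\ref{lem:non-trivial extension in CM case}: descend to the subfield fixed by the prime-to-$p$ part of the image of $\Gal_K$ in $\Aut(M)$ and reduce surjectivity of $\phi$ to the vanishing of a finite-group $\HF^1$ twisted by a non-trivial character of order prime to $p$.

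The fundamental obstacle, however, is that a fully general proof would imply Iwasawa's classical $\mu=0$ conjecture. Indeed, whenever $E/K$ has a rational $p$-torsion point the Weil pairing produces a short exact sequence $0 \to \FF_p \to E[p] \to \mu_p \to 0$, and applying $\Sub(K,E[p])$ to it would upgrade any instance of $A(K,E[p])$ to $A(K,\FF_p)$, which by Proposition~\ref{prop:relation A and mu equal zero}\,\eqref{enum:XS} is exactly the vanishing of $\mu(X_S(K))$. A purely formal, unconditional proof is therefore ruled out. The realistic target is either a proof conditional on $\mu=0$, or an extension of the CM techniques of Section~\ref{sec:IsogenyInvariance} to broader classes of $M$, using the extension class in $\Ext^1_{\Gal_K}(M/N,N)$ as the guiding invariant for isolating the obstruction to surjectivity of $\phi$.
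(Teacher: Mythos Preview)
The statement you are addressing is presented in the paper as an open \emph{conjecture} (Conjecture~$\Sub(K,M)$ in Section~\ref{sec:ClosingRemarks}), not as a theorem; the paper offers no proof, only the observation that $\Sub(K,M)$ holds whenever every subrepresentation of $M$ also occurs as a quotient, together with the descent result of Lemma~\ref{lem:descent of S under p-extensions} and the reformulation via the stronger Conjecture~$C(K,M)$ and cup products. So there is no ``paper's own proof'' to compare against, and your proposal is, appropriately, not a proof either but an outline of the obstruction.

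Your reduction via the long exact sequence is correct and is exactly the shape of the problem: after invoking Lemma~\ref{lem:A under quotiens and extensions}(1) the question becomes the surjectivity of $\phi\colon\HF^1(\Gal_S(K_\cyc),M)\to\HF^1(\Gal_S(K_\cyc),M/N)$, equivalently the vanishing of the connecting map. The paper's Conjecture~$C(K,M)$ is precisely a strengthening designed to control this connecting map after tensoring with $Q$, and the closing paragraph of the paper identifies that connecting map with a cup product when $N$ and $M/N$ are one-dimensional --- so your instinct to focus on the extension class in $\Ext^1_{\Gal_K}(M/N,N)$ matches the direction the authors point toward.

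One overstatement to flag: an unconditional proof of $\Sub(K,M)$ for all $K,M$ would not by itself yield Iwasawa's $\mu=0$ conjecture. Your argument shows only the conditional implication ``$A(K,E[p])\Rightarrow A(K,\FF_p)$'' for curves with a rational $p$-torsion point; to conclude $\mu(X_S(K))=0$ you would still need an independent instance of $A(K,E[p])$ over that $K$. So a purely formal proof of $\Sub$ is not literally ruled out by the equivalence with $\mu=0$ --- rather, as the paper's introduction also says, the two conjectures appear to be of comparable depth, and no unconditional argument is currently known.
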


In particular, for an elliptic curve $E/K$, $I(K,E)$ is equivalent to the conjunction of $\Sub(K,E'[p])$ for every $E'$ which is $K$-isogenous to $E$. Moreover, note that $\Sub(K,M)$ holds if every subrepresentation of $M$ also appears as a quotient representation of $M$. In particular, $\Sub(K,M)$ holds if the semisimplification of $M$ is a power of a simple $\Gal_K$-module. This can always be achieved by passing to a finite extension $K'$ of $K$. We also note the following.

\begin{lem}\label{lem:descent of S under p-extensions}
Let $L/K$ be a Galois extension of order a power of $p$ and $M$ be a $\Gal_K$-representations on a finite-dimensional vector space over $\FF_p$. Then $\Sub(L,M)$ implies $\Sub(K,M)$.
\end{lem}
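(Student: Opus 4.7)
The plan is a three-line descent argument using the fact that Conjecture $A$ is stable under Galois $p$-extensions in both directions (Lemma~\ref{lem:A under field extensions}.\eqref{enum:field extension pro p}). Let $N\subset M$ be a $\Gal_K$-subrepresentation, and assume $A(K,M)$ holds; we must deduce $A(K,N)$.

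First, since $L/K$ is Galois of $p$-power order, Lemma~\ref{lem:A under field extensions}.\eqref{enum:field extension pro p} applied to $M$ yields $A(L,M)$. Next, a $\Gal_K$-subrepresentation of $M$ is in particular a $\Gal_L$-subrepresentation of $M$, so $\Sub(L,M)$ applied to $N$ gives $A(L,N)$. Finally, Lemma~\ref{lem:A under field extensions}.\eqref{enum:field extension pro p} applied in the opposite direction (to $N$, using the same extension $L/K$) returns $A(K,N)$, as required.

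There is no real obstacle: the statement is a formal consequence of the invariance of $A$ under Galois $p$-extensions, combined with the trivial observation that the pullback of the set of $\Gal_K$-subrepresentations into the set of $\Gal_L$-subrepresentations is a well-defined map. No additional ingredient (e.g.\ Shapiro's lemma or the induced representation argument from part~\eqref{enum:field extension induced} of Lemma~\ref{lem:A under field extensions}) is needed, precisely because $\Sub$ quantifies only over actual subrepresentations, not over induced modules.
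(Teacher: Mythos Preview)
Your argument is correct and is precisely the unpacking of what the paper records in one line as ``an immediate consequence of Lemma~\ref{lem:A under field extensions}.\eqref{enum:field extension pro p}''. There is no difference in approach, only in level of detail.
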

\begin{proof}
This is an immediate consequence of Lemma~\ref{lem:A under field extensions}.(4).
\end{proof}

However, beware that in general, $\Sub(K,M)$ does not imply $\Sub(L,M)$, since the set of $\Gal_L$-subrepresentations of $M$ might be strictly larger than the set of $\Gal_K$-subrepresentations.

\begin{cor}
Let $M$ be a $\Gal_K$-representations on a finite-dimensional vector space over $\FF_p$ unramified outside of $S$. Then there exists a finite subextension $K'/K$ of $K_S/K$ of degree prime to $p$ such that $\Sub(K',M)$ holds. Likewise, if $E/K$ is an elliptic curve with good reduction outside of $S$, then there exists a finite subextension $K'/K$ of $K_S/K$ of degree prime to $p$ such that $I(K',E)$ holds.
\end{cor}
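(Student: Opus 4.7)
The plan is to reduce everything to the first assertion, and for that to arrange, by a prime-to-$p$ base change, that the Galois action on $M$ factors through a $p$-group. Let $G$ denote the (necessarily finite) image of $\Gal_S(K)$ in $\Aut_{\FF_p}(M)$, pick a $p$-Sylow subgroup $P\subseteq G$, and let $K'\subset K_S$ be the fixed field of the preimage of $P$ in $\Gal_S(K)$. Then $[K':K]=[G:P]$ is prime to $p$, and the restricted action of $\Gal_{K'}$ on $M$ factors through $P$. Since the only simple $\FF_p[P]$-module is the trivial representation $\FF_p$, every $\Gal_{K'}$-composition factor of $M$ is isomorphic to $\FF_p$.

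To establish $\Sub(K',M)$, I would take an arbitrary $\Gal_{K'}$-subrepresentation $N\subseteq M$ of $\FF_p$-dimension $n$ and chain the three parts of Lemma~\ref{lem:A under quotiens and extensions} together. First, $\FF_p$ occurs as a simple $\Gal_{K'}$-quotient of $M$ (take any simple quotient of $M/\mathrm{rad}(M)$), so part~(1) gives $A(K',M)\Rightarrow A(K',\FF_p)$. Induction on $n$ via part~(2) applied to $0\to\FF_p^{n-1}\to\FF_p^n\to\FF_p\to 0$ then yields $A(K',\FF_p^n)$. Finally, $N^{\sesi}\isomorph\FF_p^n$ as a $\Gal_{K'}$-module, so part~(3) delivers $A(K',N)$.

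For the elliptic curve statement I would apply the above construction to $M=E[p]$ and reuse the resulting $K'$. For any $E'/K'$ that is $K'$-isogenous to $E$, Proposition~\ref{prop:classification} over $K'$ gives $E'[p]^{\sesi}\isomorph E[p]^{\sesi}$ as $\Gal_{K'}$-modules, and by construction this common semisimplification carries trivial $\Gal_{K'}$-action. Since the kernel of $\Aut(E'[p])\to\Aut(E'[p]^{\sesi})$ is unipotent (hence a $p$-group), the full image of $\Gal_{K'}$ in $\Aut(E'[p])$ is likewise a $p$-group, and the argument of the previous paragraph applied verbatim to $E'[p]$ produces $\Sub(K',E'[p])$. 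In cases~(a)--(c) of Proposition~\ref{prop:classification}, Corollary~\ref{cor:Analysis of IKE} already gives $I(K',E)$ unconditionally, whereas in case~(d) the statements $\Sub(K',E_0[p])$ and $\Sub(K',E_1[p])$ are precisely what condition~\eqref{enum:IE3} of that corollary requires.

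The only point that deserves care is uniformity across the $K'$-isogeny class: a single choice of $K'$ must trivialize the semisimplification of $E'[p]$ for \emph{every} $E'$ in that class. This is resolved by Proposition~\ref{prop:classification}, which transports the $p$-group property from the action on $E[p]^{\sesi}$ to the action on $E'[p]^{\sesi}$, combined with the standard fact that unipotent subgroups of $\Gl_n(\FF_p)$ are $p$-groups. No step involves real difficulty beyond these observations.
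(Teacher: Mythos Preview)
Your proof is correct and arrives at the same field $K'$ as the paper: both take the fixed field of (the preimage of) a $p$-Sylow subgroup of the image of $\Gal_S(K)$ in $\Aut_{\FF_p}(M)$, respectively $\Aut_{\FF_p}(E[p])$. The difference is only in how the conclusion is drawn over $K'$. The paper first ascends to the full trivialisation field $L$ of $M$ (respectively $E[p]$), observes that $\Sub(L,M)$ is trivial and that $E/L$ falls under case~(b) of Proposition~\ref{prop:classification} so that $I(L,E)$ holds, and then descends along the $p$-extension $L/K'$ via Lemma~\ref{lem:descent of S under p-extensions}. You instead stay at $K'$ and argue directly that the action factors through a $p$-group, so that every composition factor is $\FF_p$ and the remark preceding Lemma~\ref{lem:descent of S under p-extensions} (that $\Sub$ holds whenever $M^{\sesi}$ is a power of a simple module) applies; for the isogeny class you transport this via $E'[p]^{\sesi}\isomorph E[p]^{\sesi}$, which forces the $\Gal_{K'}$-action on each $E'[p]$ to be unipotent. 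Your route avoids invoking Lemma~\ref{lem:descent of S under p-extensions} at the cost of a short extra argument for each $E'$, while the paper's route packages that uniformity into the single descent step; neither approach has any real advantage over the other. One cosmetic point: the map ``$\Aut(E'[p])\to\Aut(E'[p]^{\sesi})$'' is not canonically defined, but what you actually use is that the image of $\Gal_{K'}$ lands in the unipotent radical of the Borel stabilising a full flag, which is the correct formulation.
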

\begin{proof}
Choose $L/K$ to be a Galois subextension of $K_S/K$ trivialising the $\Gal_K$-representations $M$ and $E[p]$, respectively. In particular, $E/L$ satisfies case (b) of Proposition~\ref{prop:classification}, so that $I(L,E)$ holds by Corollary~\ref{cor:Analysis of IKE}. Conjecture $\Sub(L,M)$ is obviously true.

Now choose $K'\subset L$ to be the fixed field of a $p$-Sylow subgroup of $\Gal(L/K)$. Then $\Sub(K',M)$ and $I(K',E)$ hold by Lemma~\ref{lem:descent of S under p-extensions}.
\end{proof}

If $L/K$ is Galois of order prime to $p$, it is in general not possible to infer $\Sub(K,M)$ from $\Sub(L,M)$. To overcome this deficiency, it might be worthwhile to study the following strengthening of $\Sub(K,M)$. As before, we let $S$ denote a finite set of primes of $K$ containing the primes above $p$ and those primes where $M$ is ramified.

\begin{conj}[$C(K,M)$]
For every $\Gal_K$-subrepresentation $N\subset M$,
\[
Q\tensor_{\Omega}\HF^2(\Gal_S(K),N_{K_\cyc})\mto Q\tensor_{\Omega}\HF^2(\Gal_S(K),M_{K_\cyc})
\]
is injective.
\end{conj}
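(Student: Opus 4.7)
The plan is to rephrase $C(K,M)$ as the vanishing of a concrete boundary map and then to identify that map as a cup product with the extension class of $M$ by $N$. First I would form the long exact $\HF^*(\Gal_S(K),-_{K_\cyc})$-sequence of
\[
0\mto N\mto M\mto M/N\mto 0.
\]
The assignment $X\mapsto X_{K_\cyc}$ is exact on finite-dimensional $\FF_p$-representations, and $\cd_p \Gal_S(L)\leq 2$ for every finite subextension $L/K$ of $K_\cyc/K$, so the sequence terminates at degree two. Because $Q$ is flat over $\Omega$, applying $Q\tensor_\Omega -$ preserves exactness and yields
\[
\begin{split}
Q\tensor_\Omega \HF^1(\Gal_S(K),M_{K_\cyc}) &\xrightarrow{\pi_Q} Q\tensor_\Omega \HF^1(\Gal_S(K),(M/N)_{K_\cyc})\\
&\xrightarrow{\delta_Q} Q\tensor_\Omega \HF^2(\Gal_S(K),N_{K_\cyc})\\
&\mto Q\tensor_\Omega \HF^2(\Gal_S(K),M_{K_\cyc}) \mto 0.
\end{split}
\]
Hence $C(K,M)$ for the pair $(N,M)$ is equivalent to the vanishing of $\delta_Q$, or equivalently to the surjectivity of $\pi_Q$.

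Second, I would transport the problem back down to $K_\cyc$ using Lemma~\ref{lem:coefficient sequence}. Applied simultaneously to $N$, $M$, and $M/N$, that lemma identifies each $Q\tensor_\Omega \HF^i(\Gal_S(K),-_{K_\cyc})$, up to bounded $\Omega$-torsion contributions, with $\HF^{i-1}(\Gal_S(K_\cyc),-)$ modulo torsion. Under this identification the boundary $\delta_Q$ corresponds, again up to $\Omega$-torsion, to cup product with the class $\xi\in \HF^1(\Gal_S(K_\cyc),\sheafHom_{\FF_p}(M/N,N))$ of the restricted extension, viewed as an element of $\Ext^1_{\Gal_S(K_\cyc)}(M/N,N)$. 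This makes precise the link between $C(K,M)$ and cup products alluded to in the introduction.

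The main obstacle is then to force the image of this cup product into the $\Omega$-torsion submodule of $\HF^2(\Gal_S(K),N_{K_\cyc})$. In the very special setting of the preceding corollary, one passes to a finite extension $K'$ that trivialises the extension class $\xi$ outright, so $\delta_Q$ vanishes for free; outside such situations the cup product can carry genuine arithmetic content of the same depth as the obstructions to Conjecture $A(K,M)$ itself. Absent structural hypotheses on $M$---for example, that the semisimplification of $M$ is built from Tate twists accessible via Iwasawa's main conjecture, or that $M$ is induced from a representation of known type---I do not see a general mechanism producing the required torsion. This is consistent with the authors' framing of $C(K,M)$ as a conjecture rather than a theorem, and suggests that any serious attack on it will confront the same depth of difficulty as $A(K,M)$ and Iwasawa's $\mu=0$ conjecture.
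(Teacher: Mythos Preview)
The statement you were given is a \emph{conjecture}, not a theorem; the paper does not prove it and does not claim to. There is therefore no ``paper's own proof'' to compare against. You seem to have recognised this yourself, since your proposal ends by noting that the statement is framed as a conjecture and that you do not see a general mechanism to establish it.

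What you have written is not a proof but a reformulation and a heuristic discussion, and in that capacity it aligns well with what the paper actually does after stating the conjecture. Your first paragraph---forming the long exact sequence, tensoring with $Q$, and observing that $C(K,M)$ for the pair $(N,M)$ is equivalent to the vanishing of $\delta_Q$ or the surjectivity of $\pi_Q$---is exactly the content of the lemma immediately following the conjecture in the paper (the equivalence of formulations (1), (4), and (6) there). Your remark that the connecting map is a cup product with the extension class also matches the paper's closing paragraph, although the paper only makes this precise after first reducing, via Lemma~\ref{lem:descent of C}, to the case where both $N$ and $M/N$ are trivial one-dimensional representations, so that the cup product is literally with a class $\xi\in\HF^1(\Gal_S(K),\FF_p)$.

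One caution: your second paragraph, where you invoke Lemma~\ref{lem:coefficient sequence} to identify $Q\tensor_\Omega\HF^i(\Gal_S(K),-_{K_\cyc})$ with $\HF^{i-1}(\Gal_S(K_\cyc),-)$ ``up to bounded $\Omega$-torsion'', is imprecise. The lemma gives an exact sequence, not a direct identification, and the groups $\HF^{i-1}(\Gal_S(K_\cyc),-)$ are discrete cofinitely-generated $\Omega$-modules rather than finite-dimensional $Q$-vector spaces, so the phrase ``modulo torsion'' does not have an obvious meaning here. This does not affect your overall conclusion, but if you wanted to make the cup-product reformulation rigorous you would need to be more careful at this step, as the paper is when it restricts to the two-dimensional reducible case.
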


\begin{lem}
Let $M$ be a $\Gal_K$-representation which is unramified outside of $S$. Then the following are equivalent:
\begin{enumerate}
\item Conjecture $C(K,M)$ holds,
\item for every $\Gal_K$-subrepresentation $N\subset M$,
\[
\ker(\HF^2(\Gal_S(K_\cyc),N)\mto\HF^2(\Gal_S(K_\cyc),M))
\]
is finite,
\item for every $\Gal_K$-subrepresentation $N\subset M$,
\[
\ker(\HF^2(\Gal_S(K),N_{K_\cyc})\mto \HF^2(\Gal_S(K),M_{K_\cyc}))
\]
is finite,
\item for every $\Gal_K$-subrepresentation $N\subset M$,
\[
Q\tensor_{\Omega}\HF^1(\Gal_S(K),M_{K_\cyc})\mto Q\tensor_{\Omega}\HF^1(\Gal_S(K),(M/N)_{K_\cyc})
\]
is surjective,
\item for every $\Gal_K$-subrepresentation $N\subset M$,
\[
\coker(\HF^1(\Gal_S(K_\cyc),M)\mto\HF^2(\Gal_S(K_\cyc),M/N))
\]
is finite,
\item for every $\Gal_K$-subrepresentation $N\subset M$,
\[
\coker(\HF^1(\Gal_S(K),M_{K_\cyc})\mto \HF^1(\Gal_S(K),(M/N)_{K_\cyc}))
\]
is finite,
\item Conjecture $C(K,\pdual{M}(1))$ holds.
\end{enumerate}
\end{lem}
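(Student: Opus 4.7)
The plan is to organize the seven conditions into two clusters linked by the coefficient sequence (Lemma~\ref{lem:coefficient sequence}), with $(7)$ handled separately via Poitou--Tate duality. Specifically, I will show that $(1), (3), (4), (6)$ form an equivalent cluster centred on cohomology over $\Gal_S(K)$ with $(-)_{K_\cyc}$-coefficients, that $(2) \Leftrightarrow (5)$ is the analogous equivalence over $\Gal_S(K_\cyc)$, then use the coefficient sequence to bridge the two clusters, and finally tackle $(1) \Leftrightarrow (7)$ using Pontryagin duality.

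The basic observation is that $\Omega \isomorph \FF_p[[T]]$ is a complete DVR with fraction field $Q$, so for every finitely generated $\Omega$-module $X$, the conditions ``$Q \tensor_\Omega X = 0$'', ``$X$ is $\Omega$-torsion'', and ``$X$ is finite'' are all equivalent. Because $\cd_p \Gal_S(K) \leq 2$ and $M_{K_\cyc}$ is finitely generated free over $\Omega$, the cohomology groups $\HF^i(\Gal_S(K), M_{K_\cyc})$ are finitely generated over $\Omega$ for $i=1,2$, and the same holds for $N$ and $M/N$. By flatness of $Q$, applying $Q \tensor_\Omega$ to any $\Omega$-linear map commutes with forming kernels and cokernels; this immediately yields $(1) \Leftrightarrow (3)$ and $(4) \Leftrightarrow (6)$. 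The long exact cohomology sequence for $0 \to N \to M \to M/N \to 0$ over $\Gal_S(K)$ with coefficients $(-)_{K_\cyc}$ provides a canonical identification of $\ker(f_K)$ with $\coker(\HF^1(\Gal_S(K), M_{K_\cyc}) \to \HF^1(\Gal_S(K), (M/N)_{K_\cyc}))$, giving $(3) \Leftrightarrow (6)$; the analogous sequence over $\Gal_S(K_\cyc)$ gives $(2) \Leftrightarrow (5)$.

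To bridge the two clusters, I will prove $(1) \Leftrightarrow (2)$ directly via the coefficient sequence. Writing $A_N := \HF^2(\Gal_S(K), N_{K_\cyc})$ and $A_N^{\mathrm{tf}} := A_N/A_N^{\mathrm{tors}}$, viewed as an $\Omega$-lattice in the finite-dimensional $Q$-vector space $Q \tensor_\Omega A_N$, the coefficient sequence yields
\[
\HF^2(\Gal_S(K_\cyc), N) \isomorph (Q \tensor_\Omega A_N)/A_N^{\mathrm{tf}} \isomorph (Q/\Omega)^{\rk_\Omega A_N},
\]
and by functoriality $f_{K_\cyc}$ is the map on such quotients induced by $\phi := Q \tensor_\Omega f_K$, so that $\ker(f_{K_\cyc}) = \phi^{-1}(A_M^{\mathrm{tf}})/A_N^{\mathrm{tf}}$. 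If $\phi$ is injective, then $\phi^{-1}(A_M^{\mathrm{tf}})$ is an $\Omega$-lattice in $Q \tensor_\Omega A_N$ commensurable with $A_N^{\mathrm{tf}}$, so $\ker(f_{K_\cyc})$ is finite. Conversely, a non-zero $v \in \ker(\phi)$ produces the infinite family $\{T^{-n} v\}_{n \geq 0}$ in $\ker(\phi) \subset \phi^{-1}(A_M^{\mathrm{tf}})$, whose images in $(Q/\Omega)^{\rk_\Omega A_N}$ are all distinct and lie in $\ker(f_{K_\cyc})$.

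For $(1) \Leftrightarrow (7)$, I will appeal to Pontryagin duality via the Poitou--Tate sequence~\eqref{eqn:Poitou-Tate-I}. Subrepresentations $N' \subset \pdual{M}(1)$ correspond bijectively to $N \subset M$ via $N' = (M/N)^\vee(1)$. Applying~\eqref{eqn:Poitou-Tate-I} with input $N^\vee(1)$ (respectively $M^\vee(1)$) expresses $\HF^2(\Gal_S(K), N_{K_\cyc})^\vee$ (respectively $\HF^2(\Gal_S(K), M_{K_\cyc})^\vee$) as an extension involving $\HF^i(\Gal_S(K_\cyc), N^\vee(1))$ (respectively for $M^\vee(1)$) and the local terms $\bigoplus_{v \in S_{K_\cyc}} \HF^i(\Gal_{(K_\cyc)_v}, N^\vee(1))$; since $S_{K_\cyc}$ is finite and the modules are finite, these local contributions are finite and thus die after tensoring with $Q$. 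A diagram chase should then translate injectivity of $Q \tensor_\Omega f_K$ for the pair $N \subset M$ into the corresponding injectivity for the dual pair $(M/N)^\vee(1) \subset \pdual{M}(1)$. The main obstacle is precisely this duality step: carefully tracking the finite $\HF^0$, $\HF^1$ and local terms through the Poitou--Tate sequence and verifying that they do not affect the $Q$-localized injectivity condition.
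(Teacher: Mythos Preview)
Your treatment of the equivalences $(1)$--$(6)$ is correct and is precisely a fleshed-out version of the paper's one-line ``follows easily from the exact sequence in Lemma~\ref{lem:coefficient sequence}''. In particular, your lattice argument for $(1)\Leftrightarrow(2)$ --- identifying $\HF^2(\Gal_S(K_\cyc),N)$ with $(Q\tensor_\Omega A_N)/A_N^{\mathrm{tf}}$ and then comparing lattices --- is exactly how one unpacks that reference.

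For $(1)\Leftrightarrow(7)$ your approach differs from the paper's and runs into two issues. First, a factual slip: the local terms $\bigoplus_{v\in S_{K_\cyc}}\HF^1(\Gal_{(K_\cyc)_v},N^\vee(1))$ are \emph{not} finite when $v\mid p$ (the field $(K_\cyc)_v$ is an infinite $p$-adic extension and its $\HF^1$ with $\FF_p$-coefficients is infinite-dimensional), so they do not die after tensoring with $Q$. This does not actually destroy your identification --- sequence~\eqref{eqn:Poitou-Tate-I} still gives, modulo the genuinely finite $\HF^0$ terms, $\HF^2(\Gal_S(K),N_{K_\cyc})^\vee\cong R(N^\vee(1)/K_\cyc)$ --- but the reasoning you gave for it is wrong. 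Second, and more seriously, once you have translated $C(K,M)$ into a statement about the map $Y(N^\vee(1)/K_\cyc)\to Y(M^\vee(1)/K_\cyc)$, you still need to match this with $C(K,M^\vee(1))$, which under the same translation becomes a statement about $Y((M/N)/K_\cyc)\to Y(M/K_\cyc)$. These are not visibly the same, and your ``diagram chase should then translate'' does not indicate how to close this gap.

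The paper avoids this detour by using sequence~\eqref{eqn:Poitou-Tate-II} instead of~\eqref{eqn:Poitou-Tate-I}. The point is that~\eqref{eqn:Poitou-Tate-II} presents $\HF^2(\Gal_S(K_\cyc),M)^\vee$ directly as the kernel of $\HF^1(\Gal_S(K),(M^\vee)_{K_\cyc}(1))\to\bigoplus_v\HF^1(\Gal_{K_v},(M^\vee)_{K_\cyc}(1))$, and the map on these ambient $\HF^1$-groups induced by $N\hookrightarrow M$ is exactly the map whose $Q$-surjectivity is condition~$(4)$ applied to $M^\vee(1)$ with the subrepresentation $(M/N)^\vee(1)$ --- i.e.\ condition~$(7)$. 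Comparing~\eqref{eqn:Poitou-Tate-II} for $M$ and for $N$ thus links $(2)$ and $(7)$ in one step. So for the duality part you are better off switching to~\eqref{eqn:Poitou-Tate-II}; the chase there is short, whereas via~\eqref{eqn:Poitou-Tate-I} you would effectively have to re-derive~\eqref{eqn:Poitou-Tate-II} anyway.
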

\begin{proof}
The equivalences (1)--(6) follow easily from the exact sequence in Lemma~\ref{lem:coefficient sequence}. To show that (7) implies (2), we compare the Poitou-Tate sequences~\eqref{eqn:Poitou-Tate-II} for $M$ and $N$. The same argument with $M$ replaced with $\pdual{M}(1)$ together with the equivalence of (1) and (2) applied to $\pdual{M}(1)$ shows that (1) implies (7).
\end{proof}

\begin{lem}
Conjecture $C(K,M)$ does not depend on the finite set of primes $S$.
\end{lem}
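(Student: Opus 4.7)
My plan is to reduce the question via the preceding lemma to one of the equivalent formulations of $C(K,M)$ and then to compare the relevant cohomology groups for two sets of primes $S\subseteq T$ by means of the Gysin sequence \eqref{eqn:Gysin sequence}. The cleanest reformulation for this purpose is condition (2): for every $\Gal_K$-subrepresentation $N\subseteq M$, the kernel of $\HF^2(\Gal_S(K_\cyc),N)\mto\HF^2(\Gal_S(K_\cyc),M)$ is finite. This is convenient because \eqref{eqn:Gysin sequence} directly compares $\HF^2(\Gal_S(K_\cyc),-)$ and $\HF^2(\Gal_T(K_\cyc),-)$.

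First I would observe that any $\Gal_K$-subrepresentation $N\subseteq M$ is also unramified outside $S$ and finite-dimensional over $\FF_p$. Hence \eqref{eqn:Gysin sequence}, applied separately to $N$ and to $M$, provides surjections $\HF^2(\Gal_S(K_\cyc),N)\twoheadrightarrow\HF^2(\Gal_T(K_\cyc),N)$ and $\HF^2(\Gal_S(K_\cyc),M)\twoheadrightarrow\HF^2(\Gal_T(K_\cyc),M)$ whose respective kernels $C_N$ and $C_M$ are quotients of the \emph{finite} groups $\bigoplus_{v\in(T-S)_{K_\cyc}}\HF^0(\Gal_{k(v)},N(-1))$ and $\bigoplus_{v\in(T-S)_{K_\cyc}}\HF^0(\Gal_{k(v)},M(-1))$; finiteness here uses that $M$ is finite-dimensional over $\FF_p$ and that $(T-S)_{K_\cyc}$ is a finite set, as noted after \eqref{eqn:Gysin sequence}.

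Next I would apply the snake lemma to the commutative diagram with exact rows obtained from these two surjections together with the vertical maps induced by the inclusion $N\hookrightarrow M$. This yields an exact sequence
\[
0\mto\ker(C_N\mto C_M)\mto \ker^S_N\mto \ker^T_N\mto\coker(C_N\mto C_M),
\]
where $\ker^S_N$ and $\ker^T_N$ denote the kernels of $\HF^2(\Gal_S(K_\cyc),N)\mto\HF^2(\Gal_S(K_\cyc),M)$ and of its analogue for $T$, respectively. Since the outer terms are finite, $\ker^S_N$ is finite if and only if $\ker^T_N$ is. Running this equivalence over all $N\subseteq M$ and invoking the preceding lemma gives the asserted independence of $C(K,M)$ from the choice of $S$.

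I do not anticipate any genuine obstacle; the argument is a short diagram chase on top of the Gysin sequence. The only step that demands attention is verifying that the kernels $C_N$ and $C_M$ are finite, and this is precisely where the finite-dimensionality of $M$ over $\FF_p$ and the finiteness of $(T-S)_{K_\cyc}$ enter.
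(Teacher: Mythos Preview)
Your argument is correct and follows the same approach as the paper: both use the Gysin sequence \eqref{eqn:Gysin sequence} together with the finiteness of $\bigoplus_{v\in(T-S)_{K_\cyc}}\HF^0(\Gal_{k(v)},M(-1))$. The paper's proof is terse and leaves the diagram chase implicit; your use of reformulation (2) from the preceding lemma and the snake-lemma comparison of the surjections $\HF^2(\Gal_S(K_\cyc),-)\twoheadrightarrow\HF^2(\Gal_T(K_\cyc),-)$ is exactly how one would unpack that sentence.
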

\begin{proof}
Let $T$ be a finite set of prime containing $S$ and note that $\HF^0(\Gal_{k(v)},M(-1))$ is a finite group for every $v\in (T-S)_{K_\cyc}$. The claim of the lemma then follows from the Gysin sequence \eqref{eqn:Gysin sequence}.
\end{proof}

\begin{lem}\label{lem:descent of C}
Let $L/K$ be a finite extension of degree prime to $p$ and $M$ a $\Gal_K$-representation on a finite-dimensional vector space over $\FF_p$. Then $C(L,M)$ implies $C(K,M)$.
\end{lem}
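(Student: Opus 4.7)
The plan is to reduce to equivalent form (2) of the preceding lemma and then apply a standard restriction-corestriction argument. Explicitly, by the previous lemma ($C(K,M)$ does not depend on $S$), I may enlarge $S$ so that $L/K$ is unramified outside $S$ and in particular $L_\cyc\subset K_S$. Then $\Gal_S(L_\cyc)$ is a subgroup of $\Gal_S(K_\cyc)$ of finite index $[L_\cyc:K_\cyc]$, which divides $[L:K]$ and is therefore prime to $p$.

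Let $N\subset M$ be any $\Gal_K$-subrepresentation. Because $\Gal_L\subset \Gal_K$, the module $N$ is in particular a $\Gal_L$-subrepresentation of $M$, and I obtain the commutative diagram of restriction maps
\[
\xymatrix{
\HF^2(\Gal_S(K_\cyc),N)\ar[r]\ar[d]^{\res} & \HF^2(\Gal_S(K_\cyc),M)\ar[d]^{\res}\\
\HF^2(\Gal_S(L_\cyc),N)\ar[r] & \HF^2(\Gal_S(L_\cyc),M).
}
\]
Since $M$ and $N$ are $\FF_p$-vector spaces, every term above is an $\FF_p$-vector space, hence $p$-torsion. The identity $\cor\circ\res=[L_\cyc:K_\cyc]\cdot\id$ on $\HF^2(\Gal_S(K_\cyc),-)$ is multiplication by a number prime to $p$, so it is an isomorphism; consequently both vertical restriction maps are injective.

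By $C(L,M)$ applied, via equivalence (2) of the previous lemma, to the $\Gal_L$-subrepresentation $N\subset M$, the kernel of the bottom horizontal arrow is finite. The injectivity of the left vertical map then forces the kernel of the top horizontal arrow to be finite as well. Invoking equivalence (2) of the previous lemma a second time (now over $K$), this gives $C(K,M)$.

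No step presents a genuine obstacle here; the whole argument is the usual prime-to-$p$ cor-res descent, and the only thing to verify is that one is allowed to enlarge $S$ and that every $\Gal_K$-subrepresentation is automatically a $\Gal_L$-subrepresentation, both of which are immediate.
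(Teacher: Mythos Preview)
Your proof is correct and follows essentially the same approach as the paper: both use the equivalent form (2) of the preceding lemma and the standard restriction--corestriction argument to show that the restriction maps in the commutative square are injective, then conclude by a diagram chase. Your write-up is somewhat more explicit about enlarging $S$ and about the index $[L_\cyc:K_\cyc]$ dividing $[L:K]$, but the core reasoning is identical.
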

\begin{proof}
The restriction map $\HF^2(\Gal_S(K_\cyc),M)\mto \HF^2(\Gal_S(L_\cyc),M)$ is split injective, as its composition with the corestriction map is the multiplication by the degree of $L/K$. Hence, if $N\subset M$ is a $\Gal_K$-subrepresentation such that the group
$\ker(\HF^2(\Gal_S(L_\cyc),N)\mto\HF^2(\Gal_S(L_\cyc),M))$ is finite, then $\ker(\HF^2(\Gal_S(K_\cyc),N)\mto\HF^2(\Gal_S(K_\cyc),M))$ is also finite.
\end{proof}

Conjecture $C(K,M)$ is connected with the isogeny conjecture $I(K,E)$ for elliptic curves in the case when $M$ is a $2$-dimensional reducible representation over $\FF_p$.  We then have an exact sequence
\[
 0\mto N\mto M\mto N'\mto 0
\]
with one-dimensional representations $N$ and $N'$. Passing to an extension of degree prime to $p$ and using Lemma~\ref{lem:descent of C}, we can further reduce to the case that $N$ and $N'$ are trivial representations. In this case, $M$ represents a class $\xi$ in $\HF^1(\Gal_S(K),\FF_p)$ and the connecting homomorphism
\[
Q\tensor_{\Omega}\HF^1(\Gal_S(K),(\FF_p)_{K_\cyc})\mto Q\tensor_{\Omega}\HF^2(\Gal_S(K),(\FF_p)_{K_\cyc})
\]
is given by the cup product with $\xi$. So, in essence, Conjecture $C(K,M)$ for reducible two-dimensional representations $M$ boils down to the vanishing of cup products. We plan to investigate this further in subsequent work.

\bibliographystyle{amsalpha}
\bibliography{Literature}

\end{document}